\documentclass[twoside,journal,final,twocolumn]{IEEEtran}

\usepackage{cite}
\usepackage{color,array}
\usepackage{amsmath,amssymb,amsfonts}
\usepackage{graphicx}
\usepackage{algorithm,algorithmic}
\usepackage{hyperref}
\usepackage{textcomp}
\usepackage{optidef}
\usepackage{balance}
\usepackage{fdsymbol}

\usepackage[caption=false,font=normalsize,labelfont=sf,textfont=sf]{subfig}
\usepackage{stfloats}
\usepackage{url}
\usepackage{verbatim}

\usepackage{multirow}
\usepackage{cuted}
\usepackage{flushend}

\newcommand{\lp}{ \left( }
\newcommand{\rp}{ \right) }

\newcommand{\xk}{ x_k }
\newcommand{\yk}{ y_k }
\newcommand{\zk}{ z_k }
\newcommand{\uk}{ u_k }

\newcommand{\dxk}{ \delta x_k }

\newcommand{\xkp}{ x_{k+1} }
\newcommand{\zkp}{ z_{k+1} }
\newcommand{\dxkp}{ \delta x_{k+1} }

\newcommand{\Rn}{ {\mathbb{R}}^n }
\newcommand{\Rm}{ {\mathbb{R}}^m }
\newcommand{\Rp}{ {\mathbb{R}}^p }
\newcommand{\Rnn}{ {\mathbb{R}}^{n \times n}}

\newcommand{\Rmm}{ {\mathbb{R}}^{m \times m}}
\newcommand{\Rmn}{ {\mathbb{R}}^{m \times n}}
\newcommand{\hx}{\hat{x}}
\newcommand{\hu}{\hat{u}}
\newcommand{\hr}{\hat{r}}
\newcommand{\tx}{\tilde{x}}
\newcommand{\tu}{\tilde{u}}
\newcommand{\R}{R}
\newcommand{\Q}{Q}

\newcommand{\ueq}{u_{\text{eq}}}
\newcommand{\xeq}{x_{\text{eq}}}

\newtheorem{theorem}{Theorem}
\newtheorem{lemma}{Lemma}
\begin{document}

\title{A Data-Driven Algorithm for Model-Free Control Synthesis}

\author{Sean Bowerfind,
        Matthew R. Kirchner,~\IEEEmembership{Member,~IEEE,}
        and Gary Hewer

        
\thanks{This research was supported by the Office of Naval Research under Grant: N00014-24-1-2322. The views expressed in this article are those of the author and do not necessarily reflect the official policy or position of the Air Force, the Department of Defense or the U.S. Government. \textit{(Corresponding author: M. R. Kirchner.)}}


\thanks{S. Bowerfind and M. R. Kirchner are with the Department of Electrical and Computer Engineering, Auburn University, Auburn, AL 36832 USA (e-mail: \href{mailto:srb0063@auburn.edu}{srb0063@auburn.edu}; \href{mailto:kirchner@auburn.edu}{kirchner@auburn.edu}); G. Hewer was formally with (now retired) the Physics and Computational Sciences Division, Naval Air Warfare Center Weapons Division, China Lake, CA 93555 USA (email: gary.a.hewer.civ@us.navy.mil\href{gary.a.hewer.civ@us.navy.mil}).}}


\maketitle

\begin{abstract}
Presented is an algorithm to synthesize the optimal infinite-horizon LQR feedback controller for continuous-time systems. The algorithm does not require knowledge of the system dynamics but instead uses only a finite-length sampling of input-output data. A necessary condition that relates the optimal LQR gain to any arbitrary solution trajectory of the system is presented. An algorithm using this necessary condition, based on constrained optimization, is developed that estimates the LQR gain matrix. In addition to calculating the standard feedback gain matrix, a feedforward gain can be found to implement a reference tracking controller. This paper presents a theoretical justification for the method and shows several examples, including a validation test flight on a real scale aircraft with unknown dynamics.
\end{abstract}

\begin{IEEEkeywords}
LQR, Optimal control, Data-driven design, Q-learning, Reinforcement learning.
\end{IEEEkeywords}

\section{Introduction} \label{sec:introduction}

\IEEEPARstart{T}{he} linear quadratic regulator (LQR) has been established as one of the most useful state-space control design methodologies, as it provides many beneficial properties, including simple hardware implementation and robustness guarantees with respect to both phase and gain margin \cite{safonov1977gain,lehtomaki1981robustness}. Classical LQR design assumes linear dynamics and relies on complete knowledge of system matrices, $A$ and $B$, to compute the optimal feedback gain matrix. These matrices typically come from either direct knowledge of an inherently linear system or through a linearization from known nonlinear dynamics. 

For many systems, direct knowledge of the system is unknown a-priori, but collection of data by operating such systems is readily available. This motivates our so-called model-free approach. By this we assume the system is governed by a linear ODE of the form
\[
\dot{x}(t) =  Ax(t) + Bu(t),
\]
where the system matrices $A$ and $B$ are unknown to the designer. The goal is to design the feedback gain matrix using only observations of the state vector, $x\lp t \rp$, after an input sequence, $u\lp t \rp$, has been applied to the system. The gain matrix should be the same as the one calculated if the system matrices are known.

When the system is known to be linear, model identification techniques can be attempted to first find the $A$ and $B$ matrices before solving for the gain matrix by using the algebraic Riccati equation. By contrast, our model-free approach can be seen as streamlining the design procedure by going directly from data to LQR controller, skipping the model identification task entirely.

A more common situation arises when the system is inherently nonlinear and an LQR controller is calculated by using $A$ and $B$ matrices found through linearization at an equilibrium point \cite[Chap. 21, p. 201]{hespanha2018linear}. In this case, model identification of the original nonlinear model is more difficult. However, in a model-free design paradigm, only a collection of data where the system is operating near the equilibrium point is required. Therefore, a feedback gain matrix found from a model-free approach should be similar to the controller generated by a linearization when the model is known. 

One of the most important applications of the proposed controller design method lies within aerospace testing of small unmanned aerial vehicles (UAVs). The fidelity of an aircraft model depends heavily on the ability to accurately calculate the external forces and moments acting on the aircraft for various flight conditions, i.e. $F=ma$ and generalizations thereof \cite[Chap. 2]{stevens_aircraft_2015}. Common methods to determine these forces are computational fluid dynamics studies \cite{andersonComputationalFluidDynamics2006}, wind tunnel tests \cite{andersonFundamentalsAerodynamics2017}, and static propulsion tests \cite{zipfelModelingSimulationAerospace2007}.  For UAVs, particularly the low-cost and subscale aircraft considered in this study, the application of the above high-fidelity modeling techniques becomes time-consuming and economically infeasible. These challenges in UAV modeling are a key driving force of the present research.

There have been recent attempts to compute LQR directly from data \cite{vrabie_optimal_2013}, typically formulating with discrete-time systems. Most previous works are based on the reinforcement learning paradigm \cite{kaelbling1996reinforcement,sutton1998reinforcement,lewis_reinforcement_2011} and these family of methods rely on a process called \emph{policy iteration} \footnote{For a discussion between on-policy and off-policy methods, see \cite[Sec. 5.4, p. 100]{sutton1998reinforcement}.}. This methodology iteratively updates the policy (here the feedback gain matrix) and distinctive within this class of methods is that policy updates and data collection are coupled: Data used for the next iterate must come from the previous policy as input. Typically, this coupling is implemented with a required computer simulation of the desired platform \footnote{One such example uses the phrase: ``interactions with the system'' \cite{hall2011reinforcement}.}. When reinforcement learning approaches are applied to a model-free control design problem \cite{vamvoudakis_q-learning_2017,luo2014q,lee2012integral,lee2014integral}, a contradiction is created: If the user has enough knowledge of the system to construct a simulation, then the user has enough knowledge for traditional design techniques, and model-free approaches are unnecessary.

Another online method is presented in \cite{vamvoudakis_q-learning_2017}. Their adaptive control technique, titled actor-critic, updates candidate LQR control solutions but requires careful tuning of hyperparameters and can be susceptible to numerical instabilities. 

Offline approaches have also been investigated. Most notable is the work of \cite{farjadnasab2022model}. This work assumes discrete-time dynamics and uses this representation to remove the dependence on $A$. This formulation can induce a noise sensitivity in the data, which becomes more prominent when the sample time in the data becomes small. 

This article presents a model-free approach to synthesize an LQR controller directly from data. It is presumed that the data collected are the control sequence that drives the system and the resulting noisy observations of the state\footnote{Or observation of a function of the state, i.e. $y=Cx$.}. Since the optimal cost-to-go for the LQR problem has a known parametric form, the cost-to-go can be evaluated along any arbitrary trajectory. This cost-to-go must satisfy a specific dynamic constraint and is shown in this paper to be a necessary condition for the optimal gain matrix, $K$. This necessary condition is utilized to construct a nonlinear programming (NLP) problem, whereby the above dynamic constraint is enforced along the observed data and the solution of which is the optimal closed-loop LQR feedback gain, $K$. This NLP is formulated implicitly with respect to the state trajectory to ensure robustness to noisy data.  

The method is further extended from a model-free regulator problem to the model-free reference tracking problem. This necessitates the computation of a feedforward gain matrix, $F$, in addition to $K$. This paper develops a generalization to the dynamic constraint, which includes an arbitrary constant as an additional input, resulting in a new NLP problem. Solving this NLP simultaneously achieves the optimal feedback and feedforward gains $K$ and $F$, without knowledge of the system dynamics. 

Lastly, a problem wherein a subset of the system dynamics is available, referred to as a ``mixed-model'', is considered. As an example, a system with unknown dynamics is driven by an actuator, where the dynamics of the actuator system is readily available. This mixed-model problem is formulated by integrating known information about the actuator system with the remaining unknown dynamics, and is used to compute the optimal closed-loop LQR gain.

We demonstrate the proposed method on two examples. The first uses data generated from a benchmark linear system, inspired by a McDonnell-Douglas A-4D Skyhawk. This example serves to validate the controller synthesis procedure by comparing the resultant model-free controller gain with the ground truth controller, since the $A$ and $B$ matrices are known. This method also validates two model-free controller variations, which are model-free reference tracking and mixed-model reference tracking.

Second, the versatility of the proposed model-free controller is demonstrated through real-world flight tests. A model-free controller is implemented to control the roll angle of a subscale UAV. The entire process, from initial data collection flights to controller validation, is detailed to demonstrate the successful application of the model-free controller onboard a real aircraft. The data used to design the controller was obtained directly from sensor measurements under windy flight conditions, which confirms the method's robustness to realistic noise environments. The model-free controller was flown during challenging flight conditions and the aircraft successfully achieved all test objectives.

We summarize the contributions below:

\begin{enumerate}
\item A method to synthesize an LQR gain matrix directly from data is presented.
This method does so without knowledge of the system matrices $A$
and\textbf{ $B$}. It avoids policy iteration and uses data that is
decoupled from the policy estimation.
\item A necessary condition is derived and proved that relates state trajectories to the optimal LQR gain and does so under a general set of regularity assumptions that includes discontinuous inputs.
\item The algorithm is generalized to find the feedforward gain matrix for
LQR reference tracking problems.
\item Sensor noise is directly modeled to improve robustness to the noise encountered when collecting data from real systems.
\item The approach was validated on a real scale aircraft under challenging
environmental conditions.
\end{enumerate}

\section{LQR Optimal Control Fundamentals}\label{sec:LQR fundamentals}

Consider the following linear time-invariant (LTI) continuous time dynamical system
\begin{equation}
    \begin{cases}
        \dot{x}(t) = f\lp x,u\rp :=  Ax(t) + Bu(t), \\
        x\lp 0\rp = x_0,
    \end{cases}
    \label{eq:LTIsys}
\end{equation}
for $t\in \left[0,T\right]$ where $x(t) \in \Rn$ is the system state and $u(t) \in \Rm$ is the control input and is assumed drawn from the set:
\[
u\left(\cdot\right)\in\mathcal{U}\left(T\right)=\left\{ \varphi:\left[0,T\right]\mapsto\mathbb{R}^{m}|\varphi\left(\cdot\right)\,\text{is measurable}\right\} .
\]
One can assume without loss of generality that the initial time is always $t_0=0$ since the system \eqref{eq:LTIsys} is time invariant. As a shorthand, we denote the state trajectory 
$\left[0,T \right]\ni t\mapsto x\lp t;x_0,u\lp\cdot\rp\rp\in\Rn$ by $x\lp t\rp$. When $t$ appears, we assume the above definition that $x\lp t \rp$ evolves in time with control sequence $u\lp\cdot\rp$,
according to \eqref{eq:LTIsys} starting from initial state $x_{0}$ at $t=0$. When $x$ appears without dependence on $t$, we are referring to a specific state without regard to an entire trajectory. 

The infinite time horizon ($T\rightarrow\infty$) optimal control problem seeks to find a feedback controller of the form
\[
u\left(t\right)=-Kx\left(t\right),
\]
such that it optimizes the following cost functional:
\begin{align}
J\lp x_{0},u\lp\cdot\rp\rp &=\int_{0}^{\infty} x\lp t\rp^{\top}\Q x\lp t\rp+u\lp t\rp^{\top}\R u\lp t\rp dt,\label{eq:infinite cost funct}
\end{align}
with $x(0)=x_0$ and where $\Rnn\ni \Q=\Q^\top \succeq 0$ and $\Rmm \ni \R=\R^\top \succ 0$ are the user-supplied weight matrices.

The value function $V:\Rn \rightarrow {\mathbb{R}}$ is defined as the minimum cost, $J$, among all admissible controls for an initial state $x_0$ given as
\begin{equation}
     V\lp x_0 \rp = \inf_{u\lp\cdot\rp\in\mathcal{U}}  J\lp x_0, u\lp(\cdot\rp \rp,
     \label{eq:value funct}
\end{equation}
subject to the dynamic system constraint \eqref{eq:LTIsys}. Finding a feedback control policy that minimizes \eqref{eq:value funct} is known as the infinite horizon linear quadratic regulator (LQR) problem. It is well known that the value function takes a quadratic form
\begin{equation}
    V(x) = x^\top P x,
    \label{eq:optimal value funct}
\end{equation}
where $\Rnn \ni P=P^\top \succeq 0$. Without loss of generality, $Q$ can be written as
\[
\Q=C^{\top}C,
\]
since $\Q$ is symmetric positive semi-definite \cite[Thm. 7.2.7, p. 440]{horn2012matrix}. This representation is most common when constructing a cost functional \eqref{eq:infinite cost funct} in terms of a controlled output, $z=Cx$ \cite[Chap. 21, p. 201]{hespanha2018linear}.

It is well known \cite{kailath1980linear} that when the pair $\lp A,B\rp$ is stabilizable and the pair $\lp A,C\rp$ is detectable, then $P$ is the unique positive semi-definite solution of the algebraic Riccati (ARE) equation
\begin{equation}
    A^\top P + PA - PB\R^{-1}B^\top P +\Q = 0,
    \label{eq:ARE}
\end{equation}
and the optimal closed loop control is
\begin{equation}
u^{*}=-Kx,\label{eq:optimal control}
\end{equation}
where
\begin{equation}
    K=\R^{-1}B^{\top}P.\label{eq:LQR optimal K}
\end{equation}

\section{Model-Free Necessary Conditions for Optimality}\label{sec:model-free optimality}
The following theorem establishes a necessary condition that $P$ in $\lp\ref{eq:optimal value funct}\rp$ and $K$ in $\lp\ref{eq:LQR optimal K}\rp$ must satisfy with respect to any arbitrary solution of the system ODE defined in $\lp\ref{eq:LTIsys}\rp$ for an LQR controller. This will be utilized in developing a numeric algorithm to synthesize the optimal LQR controller from a finite-length segment of empirically collected system data without having to know the matrices $A,B$ that define the dynamical model of the system.

\begin{theorem}\label{thm: main thm}
Suppose the trajectory pair $\left[0,T\right]\ni t \mapsto\left\{ \hx(t), \hu(t)\right\}\in\mathbb{R}^n\times\mathbb{R}^m$, is a solution to $\lp\ref{eq:LTIsys}\rp$ in the Carath\'{e}odory sense, $P=P^\top \succeq 0$ is the solution to the Algebraic Riccati Equation $\lp\ref{eq:ARE}\rp$, and $K$ is the LQR optimal gain matrix given in $\lp\ref{eq:LQR optimal K}\rp$. Then the scalar valued function $\left[0,T\right]\ni t\mapsto V\lp \hx\lp t\rp\rp\in{\mathbb{R}}$, where $V$ is defined by \eqref{eq:optimal value funct}, satisfies       
\begin{equation}
    \begin{split}
            \dot{V}\lp \hx\lp t\rp\rp &  =\hx\lp t\rp^{\top}K^{\top}\R K\hx\lp t\rp\\ 
            &-\hx\lp t\rp^{\top}\Q \hx\lp t\rp+2\hx\lp t\rp^{\top}K^{\top}\R \hu\lp t\rp,
    \end{split}
    \label{eq:Vdot optimality condition}
    \end{equation} 
almost everywhere on $t\in [0,T]$.
\end{theorem}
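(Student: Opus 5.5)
The plan is to compute $\dot V$ directly along the Carath\'eodory solution and then use the Riccati equation \eqref{eq:ARE} together with \eqref{eq:LQR optimal K} to eliminate $A$ and $B$.

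\textbf{Regularity.} A Carath\'eodory solution $\hx(\cdot)$ of \eqref{eq:LTIsys} is absolutely continuous on $[0,T]$. It satisfies $\dot{\hx}(t)=A\hx(t)+B\hu(t)$ for almost every $t$, and $\hu$ is integrable there, as is implicit in the existence of the solution. The map $V(x)=x^\top Px$ is smooth, hence locally Lipschitz. A $C^1$ function composed with an absolutely continuous function is absolutely continuous, and the chain rule holds at every point where $\hx$ is differentiable. This gives, almost everywhere,
\[
\dot V(\hx(t)) = \dot{\hx}(t)^\top P\hx(t)+\hx(t)^\top P\dot{\hx}(t)=2\hx(t)^\top P\bigl(A\hx(t)+B\hu(t)\bigr).
\]
This step is the only place where the weak regularity of $\hu$ (measurable, possibly discontinuous) matters. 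To avoid relying on a general chain rule, one can argue directly: on the full-measure set where $\hx$ is differentiable, write
\[
V(\hx(t+h))-V(\hx(t))=\bigl(\hx(t+h)-\hx(t)\bigr)^\top P\bigl(\hx(t+h)+\hx(t)\bigr),
\]
divide by $h$, and let $h\to0$ using the continuity of $\hx$.

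\textbf{Algebraic elimination.} Symmetrize to get $2\hx^\top PA\hx=\hx^\top(A^\top P+PA)\hx$. The ARE \eqref{eq:ARE} gives
\[
A^\top P+PA=PBR^{-1}B^\top P-Q.
\]
From \eqref{eq:LQR optimal K}, $B^\top P=RK$, so $PB=K^\top R$ by symmetry of $P$ and $R$. Therefore
\[
PBR^{-1}B^\top P=K^\top R R^{-1}RK=K^\top RK,
\]
and also $2\hx^\top PB\hu=2\hx^\top K^\top R\hu$. Substituting these two identities yields \eqref{eq:Vdot optimality condition} almost everywhere on $[0,T]$.

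\textbf{Expected obstacle.} The algebra is routine, so the only delicate point is the almost-everywhere differentiability and chain rule in the Carath\'eodory setting. I will handle it with the product-difference argument above, which needs only differentiability of $\hx$ at the point $t$ and continuity of $\hx$. No stabilizability or optimality argument is needed beyond $P$ satisfying \eqref{eq:ARE}.
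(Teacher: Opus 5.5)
Your proof is correct. It reaches the same key identity as the paper, $2\hat{x}^\top PA\hat{x}=\hat{x}^\top PBR^{-1}B^\top P\hat{x}-\hat{x}^\top Q\hat{x}$, but by a more elementary route.

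The paper gets this identity through the stationary HJB equation \eqref{eq:HJB equation for inf LQR} for the value function, under Lipschitz assumptions that guarantee its existence and uniqueness. It minimizes the Hamiltonian to obtain $u^*=-R^{-1}B^\top Px$ and substitutes back. You instead symmetrize $2x^\top PAx=x^\top(A^\top P+PA)x$ and read the result directly off \eqref{eq:ARE}. This is the pointwise version of the integrand computation the paper itself uses in the proof of Lemma~\ref{lemma:2}.

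Your route shows the condition is purely algebraic. It needs only that $P$ is symmetric and solves the ARE, so it holds for every symmetric solution. It uses neither stabilizability/detectability nor the fact that $x^\top Px$ is the optimal cost. The HJB framing mainly buys an interpretation of the constraint as $\min_u H=0$ evaluated along data, which motivates the paper's ``necessary condition for optimality'' language.

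Similarly, your identity $V(a)-V(b)=(a-b)^\top P(a+b)$ replaces the paper's citations of general chain rules with a one-line argument that exploits the quadratic structure of $V$.

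One small inaccuracy, which you never use: a Carath\'eodory solution only forces $B\hat{u}=\dot{\hat{x}}-A\hat{x}$ to be integrable. It does not force $\hat{u}$ itself to be integrable when $\ker B\neq\{0\}$. Since $\hat{u}$ enters the conclusion only through $PB\hat{u}$, nothing is affected.
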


\begin{IEEEproof}
Since the dynamics, $f\lp x,u\rp=Ax+Bu$, are Lipschitz continuous in $x$ and the control sequence $u\lp\cdot\rp$ is measurable in $t$ for fixed $x$, then $\lp\ref{eq:LTIsys}\rp$ satisfies the Carath\'{e}odory conditions \cite{filippov1988differential} and there exists an absolutely continuous solution $x\lp t;x_0,u\lp\cdot\rp \rp$ that satisfies $\lp\ref{eq:LTIsys}\rp$ almost everywhere $t\in \left[ 0,T \right]$ \cite[Chap. 2, Theorem 1.1, p. 43]{coddington1956theory}. 

Since by assumption, $\hx\lp t \rp$ is a Carath\'{e}odory solution, it is absolutely continuous and implies that $\dot{\hx}\lp t \rp$ exists almost everywhere on $t\in\left[0,T\right]$ \cite[Chap. 5, Corollary 12, p. 109]{royden1988real}. This implies that the composition $\left[ 0,T\right]\ni t \mapsto V\lp \hx\lp t \rp \rp$ is differentiable almost everywhere \cite[Theorem 2.12]{kolavr2025chain} and found from the standard chain rule \cite[Theorem 2]{serrin1969general} as
\begin{align}
\dot{V}\lp \hx\lp t\rp\rp & =\frac{\partial V}{\partial x}\dot{\hx}\lp t\rp \nonumber\\
 & =\frac{\partial V}{\partial x}\lp A\hx\lp t\rp+B\hu\lp t\rp\rp\nonumber \\
 & =2\hx\lp t\rp^{\top}PA\hx\lp t\rp+2\hx\lp t\rp^{\top}PB\hu\lp t\rp,\label{eq:implicit value function}
\end{align}
almost everywhere $t\in \left[ 0,T \right]$, where the last line is from
\[
\frac{\partial}{\partial x}\left\{ V\lp x \rp\right\} =\frac{\partial}{\partial x}\left\{ x^{\top}Px\right\} =2x^{\top}P.
\]

Under a set of mild Lipschitz continuity assumptions, there exists a unique value function \eqref{eq:value funct} that satisfies, for all $x\in\Rn$, the following stationary
(time-independent) Hamilton--Jacobi partial differential equation (PDE)
\begin{equation}
\underset{u\in\Rm}{\min}H\lp x,u,\frac{\partial V}{\partial x}\rp=0,\label{eq:HJB equation for inf LQR}
\end{equation}
where
\begin{align*}
H\lp x,u,p\rp:=\left\langle p, f\lp x,u\rp \right\rangle + x^{\top}\Q x+u^{\top}\R u.
\end{align*}
This implies that for the system given in \eqref{eq:LTIsys}, the cost functional given in \eqref{eq:infinite cost funct}, and noting that $P$ satisfies $\lp \ref{eq:ARE}\rp$ we have
\begin{align}
H\lp x,u,\frac{\partial V}{\partial x}\rp = & \left\langle\frac{\partial V}{\partial x}, Ax+Bu\right\rangle +x^{\top}\Q x+u^{\top}\R u\nonumber \\
 =&2x^{\top}PAx+2x{}^{\top}PBu + x^{\top}\Q x+u^{\top}\R u.\label{eq:raw hamiltonian}
\end{align}
Since $P$ must satisfy $\lp \ref{eq:ARE}\rp$, the  optimal control is given by
\begin{equation}
u^{*}=-\R^{-1}B^{\top}Px.\label{eq:optimal control hamiltonian}
\end{equation}
This implies that, after substituting \eqref{eq:optimal control hamiltonian} into \eqref{eq:raw hamiltonian}, the equality in \eqref{eq:HJB equation for inf LQR} can be written as
\[
2x^{\top}PAx-x^{\top}PB\R^{-1}B^{\top}Px+x^{\top}\Q x=0,
\]
which gives the expression
\begin{equation}
2x^{\top}PAx=x^{\top}PB\R^{-1}B^{\top}Px-x^{\top}\Q x.\label{eq:Sub A out}
\end{equation}
When \eqref{eq:Sub A out} is evaluated along the trajectory $\hx\lp t\rp$ and substituted into \eqref{eq:implicit value function}, the dependence on the system matrix, $A$, is removed and results in
\begin{align}
\dot{V}\lp \hx\lp t\rp\rp & =\hx\lp t\rp^{\top}PB\R^{-1}B^{\top}P\hx\lp t\rp\nonumber \\
 & -\hx\lp t\rp^{\top}\Q\hx\lp t\rp+2\hx\lp t\rp^{\top}PB\hu\lp t\rp.
\end{align}
Performing the substitution with $K = \R^{-1}B^\top P$ gives
\begin{align}
\dot{V}\lp \hx\lp t\rp\rp & =\hx\lp t\rp^{\top}K^\top \R K\hx\lp t\rp\nonumber \\
 & -\hx\lp t\rp^{\top}\Q\hx\lp t\rp+2\hx\lp t\rp^{\top}K^\top \R\hu\lp t\rp,\label{eq:cont. time equality constraint}
\end{align}
which holds almost everywhere on $t\in\left[0,T \right]$, and the result is shown.
\end{IEEEproof}

\section{Data-Driven Control Synthesis}\label{sec:data driven control}

Consider a trajectory segment of finite length on $\left[0,T\right]$ for some $0<T<\infty$, whereby $N+1$ samples are observed on the time grid
\[
\pi^{N}=\left\{ t_{k}:k=0,\ldots ,N\right\} .
\]
The samples observed are the control inputs, $u\lp t_{k}\rp$, and output measurements $y\lp t_{k}\rp$, where direct observation of the full state is not assumed, but rather a classical linear observation model perturbed by noise
\[
y\lp t_{k}\rp =x\lp t_{k}\rp+\epsilon,
\]
where $\epsilon$ is zero mean i.i.d. Gaussian noise. We use the shorthand
\begin{equation}\label{eq:notation}
\begin{cases}
\xk:=x\lp t_{k}\rp,\\
\yk:=y\lp t_{k}\rp,\\
\uk:=u\lp t_{k}\rp,
\end{cases}
\end{equation}
for any $k\in\pi^{N}$. Additionally, denoting $X$ as the collection of all latent (unobserved) states defined by
\begin{align}
    X:=\left[x_{0},\ldots,x_{N}\right],
\label{eq:bigX}
\end{align}
denoting $U$ as the collection of all known control inputs by
\begin{align}
    U:=\left[u_{0},\ldots,u_{N}\right],
\label{eq:bigU}
\end{align}
and denoting $Y$ as the collection of all output measurements defined by
\begin{align}
   Y:=\left[y_{0},\ldots,y_{N}\right].
\label{eq:bigY} 
\end{align}
The rate in \eqref{eq:cont. time equality constraint} is discretized, utilizing the known structure of $V$, so that for any time $t_{k}$
\[
\dot{V}\lp x\lp t_{k}\rp\rp=2\xk^{\top}P\dot{x}_k\approx 2\xk^{\top}P D_{k}^{x},
\]
where $D_{k}^{x}$ is a finite difference approximation of $ \dot{x}\lp t_{k}\rp$. There are no requirements on any specific finite differencing form or method, so for simplicity of what is to follow, the first-order forward Euler method
\begin{equation}
D_{k}^{x}=\frac{x_{k+1}-x_{k}}{\Delta t},\label{eq:finte diff in V}
\end{equation}
is used, with $\pi^{N}$ consisting of uniformly spaced time samples\footnote{Recall, $N$ is defined as the number of samples \emph{not} including the index 0.}
\[
\Delta t=\frac{T}{N}.
\]
Many approximations exist for $D_{k}^{x}$, for example higher-order finite differencing schemes \cite{mathews2004numerical} as well as trajectory representations for both uniformly spaced \cite{cichella2018bernstein} and non-uniform time grids \cite{elnagar1995pseudospectral}. These can increase computational accuracy and can be applied to what
follows but are outside the scope of this work.

Proceeding with $D_{k}^{x}$ as in \eqref{eq:finte diff in V}, $\dot{V}\lp x\lp t_{k}\rp\rp$ is approximated by
\[
\dot{V}\lp x\lp t_{k}\rp\rp=\frac{2}{\Delta t}\lp x_{k}^{\top}Px_{k+1}-x_{k}^{\top}Px_{k}\rp.
\]
The equality \eqref{eq:cont. time equality constraint} from Theorem \ref{thm: main thm} implies the following holds for all $k=0,\ldots N-1$: 
\begin{align}
\frac{2}{\Delta t}\lp x_{k}^{\top}Px_{k+1}-x_{k}^{\top}Px_{k}\rp & =x_{k}^{\top}K^\top \R Kx_{k}\nonumber \\
 & -x_{k}^{\top}\Q x_{k}+2x{}_{k}^{\top}K^\top \R u_{k}.\label{eq:discrete equality}
\end{align}
Given the set of observations $\left\{ \uk,\yk\right\} _{k=0,\ldots,N}$, the goal is to compute the pair $\lp P,K\rp$, with $P$ being symmetric positive semi-definite, that satisfies \eqref{eq:discrete equality}. This is achieved by the following constrained optimization:
\begin{equation}
\begin{cases}
\underset{L,K,X}{\min} & \left\Vert \text{vec}\left(Y\right)-\text{vec}\left(X\right)\right\Vert _{2}^{2}\\ 
\\
\underset{\forall k=0,\ldots,N-1}{\text{subject to}} & \frac{2}{\Delta t}\lp\xk^{\top}L^{\top}L\xkp-\xk^{\top}L^{\top}L\xk\rp\\
 & -\xk^{\top}K^\top \R K\xk\\
 & +\xk^{\top}\Q\xk-2\xk^{\top}K^\top \R\uk=0,
\end{cases}\label{eq:NLP}
\end{equation}
where $L$ is defined as $P=L^{\top}L$, and where $\text{vec}\lp\cdot\rp$ is the vectorize operator that reshapes a matrix into a column vector. Solving directly for $L$ instead of $P$ enforces the constraint that $P$ must be symmetric positive semi-definite \cite[Thm. 7.2.7, p. 440]{horn2012matrix}. The norm-squared metric was chosen as it is the maximum likelihood estimator for i.i.d. Gaussian noise. The implicit formulation ensures the estimation is robust to measurement noise, and constructing estimation problems using an implicit formulation was successfully demonstrated in challenging environments such as estimating dynamics models with data from highly variable human behavior \cite{bonner2026pilot}. The LQR optimal feedback control follows from \eqref{eq:NLP} with $u=-Kx$.

\subsection{A Note On Data Collection}
As is the case with any method based solely on sampled data, care must be taken with respect to how the data is collected\footnote{As a concrete example, see the model-identification method known as Dynamic Mode Decomposition\cite{proctor2016dynamic}, and the corresponding note in \cite[p. 395]{brunton2022data}.}. Qualitatively, control inputs should produce a trajectory that adequately spans the $n$-dimensional state space of the original open-loop system. Typically, the assumption of \emph{persistent excitation}\cite{willems2005note,van2020willems} is made with respect to the input. Formal quantification of the diversity of the data, while an active area of current research, is beyond the scope of this current article and is left to future work.

 \subsection{A Note on Non-Zero Equilibrium Points}
The structure of the NLP problem in \eqref{eq:NLP} assumes that the unknown model is being perturbed about the equilibrium point $\{\xeq, \ueq \} = \{\mathbf{0}, \mathbf{0}\}$. This criteria can be generalized for a nonzero equilibrium state $\xeq\in \Rn$ and control $\ueq\in\Rm$. The method is generalized to the perturbation space with
\begin{equation}
    \begin{aligned}
        \delta x = x - \xeq, \\
        \delta u = u - \ueq.
    \end{aligned}
    \label{eq:perturbed state and control}
\end{equation}
See, for example, \cite[Fig. 2.1]{hespanha2018linear} for a visual depiction of these quantities. Likewise, we denote by $\delta x_k$ and $\delta u_k$ as the perturbed state at time $t_k$. By applying \eqref{eq:discrete equality} to \eqref{eq:perturbed state and control}, the equilibrium pair $\{\xeq, \ueq \}$ are jointly solved, along with $K$, from the following optimization problem:
\begin{equation}
\begin{cases}
\underset{L,K,\delta X, \xeq,\ueq}{\min} & \left\Vert \text{vec}\left(\delta Y\right)-\text{vec}\left(\delta X\right)\right\Vert _{2}^{2}\\
\\
\underset{\forall k=0,\ldots,N-1}{\text{subject to}} & \frac{2}{\Delta t}\lp\dxk^{\top}L^{\top}L\dxkp-\dxk^{\top}L^{\top}L\dxk\rp\\
 & -\dxk^{\top}K^\top R K\dxk\\
 & +\dxk^{\top}Q\dxk-2\dxk^{\top}K^\top R\lp u_k-\ueq \rp=0,
\end{cases}\label{eq:NLP_trim}
\end{equation}
where $\delta Y = Y - \ueq$ and $\delta X:=\left[\delta x_{0},\ldots,\delta x_{N}\right]$. The control is given as $u=-K\lp x-\xeq \rp+\ueq$.

\section{Connection To Q-Learning}\label{sec:Q-learning}
Many existing methods attempting data-driven approaches to find
the controller without knowledge of the system matrices, $A$ and
$B$, do so under the so-called reinforcement learning paradigm. We
will show here that the method presented above can be equivalently
seen as a variant of this framework, though our method importantly avoids policy iteration for computation. We note that while the implicit
dynamic constraints given in \eqref{eq:cont. time equality constraint} are mathematically equivalent in the
continuous setting to what follows, it is observed that \eqref{eq:discrete equality} has significantly
greater numeric stability when implemented on sampled-data.

We begin by taking a Q-learning \cite{watkins1989learning,watkins1992q} point of view to
the above problem, and define a Q-function for a continuous-time case \cite{luo2014q} with
\[
Q\lp t;x \lp \cdot \rp,u\lp\cdot\rp\rp:=\int_{t}^{\infty}x\lp\tau\rp^{\top}Mx\lp\tau\rp+u\lp\tau\rp^{\top}Ru\lp\tau\rp d\tau.
\]
Note that the Q-function is simply an evaluation of the cost functional,
$J$ from \eqref{eq:infinite cost funct}, evaluated on an arbitrary trajectory, starting at time
$t$. This implies the following semi-group property along a trajectory
\begin{align}
Q\lp t;x\lp \cdot\rp,u\lp\cdot\rp\rp = & Q\lp t+T;x\lp \cdot \rp,u\lp\cdot\rp\rp\nonumber \\
 +&\int_{t}^{t+T}x\lp\tau\rp^{\top}Mx\lp\tau\rp+u\lp\tau\rp^{\top}Ru\lp\tau\rp.\label{eq:Q semi-group}
\end{align}
Trivially, the optimal $Q$ function is equivalent to the value function,
i.e.
\[
Q^{*}\lp t\rp :=\underset{u\lp\cdot\rp}{\min}\,Q\lp t;x\lp \cdot \rp,u\lp\cdot\rp\rp=V\lp x\lp t \rp \rp.
\]
Proceeding with an \emph{advantage learning} framework \cite{baird_reinforcement_1994}, we
define the advantage function as
\begin{equation}
{\mathbb{A}}\lp t;x \lp \cdot \rp ,u\lp\cdot\rp\rp:=Q\lp t;x(\cdot),u\lp\cdot\rp\rp-V\lp x(t)\rp.\label{eq:Advantage def}
\end{equation}

The following lemma introduces one such advantage function that will be useful in our analysis.
\begin{lemma}\label{lem:advantage}
Suppose $P$ satisfies the algebraic Riccati equation. Then the following advantage function satisfies \eqref{eq:Q semi-group} $:$
\begin{equation}
{\mathbb{A}}\lp t;x \lp \cdot \rp,u\lp\cdot\rp\rp=\int_{t}^{\infty}\left\Vert u\lp\tau\rp+R^{-1}B^{\top}Px\lp\tau\rp\right\Vert _{R}^{2}d\tau,\label{eq:Advantage formula}
\end{equation}
where $\|\cdot\|_{R}=\sqrt{\langle\cdot,R\cdot\rangle}$ denotes the
norm induced by the symmetric positive definite matrix $R$.
\end{lemma}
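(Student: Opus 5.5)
The plan is to verify directly that, along any Carath\'eodory solution $\{x(\cdot),u(\cdot)\}$ of \eqref{eq:LTIsys}, the difference $Q\lp t;x(\cdot),u(\cdot)\rp - V\lp x(t)\rp$ equals the integral in \eqref{eq:Advantage formula}. Since that integral is itself of the form $\int_t^\infty(\cdot)\,d\tau$, it satisfies the semi-group identity \eqref{eq:Q semi-group} with running cost $\|u+R^{-1}B^\top Px\|_R^2$. Moreover, $Q$ and $V$ differ by exactly this advantage. The core of the argument is therefore a completion-of-squares identity for the running cost, combined with the derivative formula already established in Theorem~\ref{thm: main thm}.

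\emph{Step 1 (pointwise identity).} Write $K=R^{-1}B^\top P$. Expand
\[
\|u+Kx\|_R^2=u^\top Ru+2x^\top K^\top Ru+x^\top K^\top RKx .
\]
Theorem~\ref{thm: main thm} gives, almost everywhere,
\[
\dot V(x(\tau))=x^\top K^\top RKx-x^\top Qx+2x^\top K^\top Ru
\]
(here $Q$ denotes the weight matrix, written $M$ in the $Q$-function). Subtracting, we obtain for almost every $\tau$
\[
x^\top Qx+u^\top Ru=\|u+Kx\|_R^2-\dot V(x(\tau)).
\]
This is the running-cost decomposition that removes $A$.

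\emph{Step 2 (integrate).} Since $\tau\mapsto V(x(\tau))$ is absolutely continuous on every compact interval (by the chain-rule argument used in the proof of Theorem~\ref{thm: main thm}), integrating over $[t,t+T]$ gives
\[
\int_t^{t+T}\!\big(x^\top Qx+u^\top Ru\big)\,d\tau=\int_t^{t+T}\!\|u+Kx\|_R^2\,d\tau+V(x(t))-V(x(t+T)).
\]
This already yields the finite-horizon form of the semi-group property, $\mathbb{A}(t)=\mathbb{A}(t+T)+\int_t^{t+T}\|u+Kx\|_R^2\,d\tau$, together with the matching identity for $Q-V$.

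\emph{Step 3 (pass to the limit), expected to be the main obstacle.} Let $T\to\infty$. We need $V(x(t+T))\to 0$, or at least convergence of both sides.

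If $Q(t)<\infty$ and $(A,C)$ is detectable with $(A,B)$ stabilizable, finite cost forces $x(\tau)\to0$, and hence $V(x(\tau))\to 0$. This gives $Q(t)-V(x(t))=\int_t^\infty\|u+Kx\|_R^2\,d\tau$, which is \eqref{eq:Advantage formula}.

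If $Q(t)=\infty$, both sides are $+\infty$ by monotone convergence: the left integrand is nonnegative, and $V\ge0$ bounds the boundary term from one side.

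I would try first to argue $x\to0$ from finiteness of $\int x^\top C^\top Cx+u^\top Ru$. Write the dynamics as $\dot x=(A-LC)x+LCx+Bu$ with $A-LC$ Hurwitz; the inputs $Cx$ and $u$ are in $L^2$, so $x\in L^2$, and then $\dot x\in L^2$, which gives $x\to0$.
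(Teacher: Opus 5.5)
Your proposal is correct. It proves more than the paper does, and the finite-horizon core is the same. Your Steps 1--2 are exactly the content of the paper's supporting Lemma~\ref{lemma:2}, its adaptation of Willems' Lemma 6. The paper gets the pointwise identity $2x^\top P(Ax+Bu)=\|u+R^{-1}B^\top Px\|_R^2-u^\top Ru-x^\top Mx$ directly from the ARE. You read it off from Theorem~\ref{thm: main thm}, which is the same completion of squares.

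The paper then finishes purely algebraically. It splits the advantage integral at $t+T$, writes $\mathbb{A}=Q-V$ on both sides, and substitutes Lemma~\ref{lemma:2} to obtain \eqref{eq:Q semi-group}. In effect it treats $Q$ as the function $\mathbb{A}+V$ and never lets $T\to\infty$. So your Step~3, which you flagged as the main obstacle, is not needed for the lemma as stated. Step~2 combined with the trivial splitting of $\mathbb{A}$ already finishes the proof.

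What Step~3 buys you is a stronger and more meaningful result: the formula \eqref{eq:Advantage formula} coincides with $Q-V$ for the \emph{actual} cost-to-go along the trajectory. This matters because for the true $Q$-function, \eqref{eq:Q semi-group} holds trivially, so the paper's version essentially reduces to Lemma~\ref{lemma:2}.

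The price is hypotheses not contained in ``$P$ satisfies the ARE'':
\begin{itemize}
\item detectability of $(A,C)$, to get $x(\tau)\to0$ from finite cost (stabilizability is not actually used there);
\item $P\succeq0$, for the infinite-cost case. This holds for the stabilizing solution, but not for an arbitrary symmetric ARE solution as Lemma~\ref{lemma:2} allows.
\end{itemize}

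Your limiting argument itself is sound. The observer-form rewriting gives $x\in L^2$, hence $\dot x\in L^2$. Then $\frac{d}{d\tau}|x|^2\in L^1$ and $|x|^2\in L^1$ force $x\to0$.

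One small correction: absolute continuity of $\tau\mapsto V(x(\tau))$ on compact intervals follows because $V$ is locally Lipschitz and $x$ is absolutely continuous. It does not follow from the a.e.\ differentiability established in Theorem~\ref{thm: main thm}.
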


The proof is given in the appendix. It is worth noting there is an additional body of literature under the name integral reinforcement learning \cite{lee2012integral,lee2014integral} when these advantage functions contain integral expressions.

Lemma \ref{lem:advantage} allows us to write $\eqref{eq:Q semi-group}$ as
\begin{align*}
 & V\lp x\lp t\rp\rp+\int_{t}^{t+T}\left\Vert u\lp\tau\rp+R^{-1}B^{\top}Px\lp\tau\rp\right\Vert _{R}^{2}d\tau\\
 & =V\lp x\lp t+T\rp\rp\\
 & +\int_{t}^{t+T}x\lp\tau\rp^{\top}Mx\lp\tau\rp+u\lp\tau\rp^{\top}Ru\lp\tau\rp d\tau.
\end{align*}
Referencing \eqref{eq:optimal value funct} and using the notation defined in \eqref{eq:notation} we arrive at the following equality:
\begin{align*}
 & \lp x_{k+1}^{\top}Px_{k+1}-x_{k}^{\top}Px_{k}\rp\\
 =&\int_{t}^{t+T}\left\Vert u\lp\tau\rp+R^{-1}B^{\top}Px\lp\tau\rp\right\Vert _{R}^{2}d\tau\\
  &-\int_{t}^{t+T}x\lp\tau\rp^{\top}Mx\lp\tau\rp+u\lp\tau\rp^{\top}Ru\lp\tau\rp d\tau.
\end{align*}
If Euler's method is used to discretize the integral
term above, then after some algebra we arrive at the expression
\begin{align}
\frac{1}{\Delta t}\lp x_{k+1}^{\top}Px_{k+1}-x_{k}^{\top}Px_{k}\rp & =x_{k}^{\top}K^\top \R Kx_{k}\nonumber \\
 & -x_{k}^{\top}\Q x_{k}+2x{}_{k}^{\top}K^\top \R u_{k}.\label{eq:alt discrete equality}
\end{align}
Noting that the left hand side of the above equation is a simple (forward) finite difference approximation of $\dot{V}\lp x_k \rp$, we conclude that the advantage function in \eqref{eq:Advantage formula} gives rise to a Q-function that is equivalent to a first-order approximation of the necessary condition \eqref{eq:Vdot optimality condition} given in Theorem \ref{thm: main thm}. 

\section{Model-Free Reference Tracking Control} \label{sec: ref tracking}
Thus far, the model-free controller utilizes a state feedback structure which designs a controller of the form $u =-Kx$ to stabilize a system's transient response, i.e. drive the state vector to zero. A more commonly encountered problem is the need to find a controller that tracks an input reference command. Denote by $r\in\mathbb{R}^{q}$ the desired reference and we
define the tracking problem by a constant $H\in\mathbb{R}^{n\times q}$ such that
\[
x-Hr=0\implies r=y,
\]
where $y=Cx$ is the output. Note that $H$ is not necessarily unique and is commonly constructed to have $r$ track specific states. In that case, each column of $H$ would consist of zeros everywhere except for a one in the $i$-th row corresponding to the $i$-th state vector element that is to be tracked. Alternatively, a general construction of $H$ can be found from
\[
H=C^{\top}\left(CC^{\top}\right)^{-1}.
\]
The tracking problem has a controller with structure \cite{franklinFeedbackControlDynamic2018}
\[
u=-K\left(x-Hr\right)+Fr,
\]
where $F\in\mathbb{R}^{m\times q}$ is a feedforward gain matrix required to eliminate steady-state error when $r=y$. The reference tracking control structure is illustrated in Figure \ref{fig:ref_track_blkdiag}. The model-free tracking problem is to simultaneously compute the stabilizing LQR feedback gain matrix $K$ and the feedforward gain matrix $F$ without knowledge of the system dynamics.

Define the following variables:
\begin{equation}
    \begin{aligned}
        \tx(t) & :=  x(t) - Hr, \\
        \tu(t) & :=  u(t) - Fr.
    \end{aligned}
    \label{eq:ref_deltas}
\end{equation}
Note that by definition $\tx \rightarrow 0$ and $\tu \rightarrow 0$ when $r=y$. Therefore, the gain $K$ is sought such that it optimizes
\begin{align}
 J\lp  \tilde     x_{0}, \tilde u\lp\cdot\rp\rp &=\int_{0}^{\infty} \tx\lp t\rp^{\top}\Q \tx\lp t\rp+ \tu\lp t\rp^{\top}\R \tu\lp t\rp dt.\label{eq:infinite cost funct ref}
\end{align}

Suppose the trajectory pair $\left[0,T\right]\ni t \mapsto\left\{ x(t), u(t)\right\}$ is an arbitrary solution to $\lp \ref{eq:LTIsys} \rp$ under the same assumptions as Theorem \ref{thm: main thm}, and let $\hr\neq 0$ be an arbitrary fixed constant reference. Then Theorem \ref{thm: main thm} is applied to $\lp \ref{eq:infinite cost funct ref} \rp$ and results in
\begin{equation}
    \begin{split}
            \dot{V}\lp \tx \lp t\rp\rp &  =\tx\lp t\rp^{\top}K^{\top}\R K\tx\lp t\rp\\ 
            &-\tx\lp t\rp^{\top}\Q\tx\lp t\rp+2\tx\lp t\rp^{\top}K^{\top}\R \lp u\lp t\rp-F\hr \rp,
    \end{split}
    \label{eq:Vdot_ref_track}
\end{equation}
where $\tx\lp t \rp$ is defined as in \eqref{eq:ref_deltas} using the solution $x\lp t \rp$.

When the same finite different approximation  detailed in Section \ref{sec:data driven control} is applied to \eqref{eq:Vdot_ref_track}, the following equality holds for all $k=0,\dots,N-1$:
\begin{align}
\frac{2}{\Delta t} & \lp \tx_{k}^{\top}P\tx_{k+1}-\tx_{k}^{\top}P\tx_{k}\rp  = \nonumber\\ 
& \tx_{k}^{\top}K^\top \R K\tx_{k}-\tx_{k}^{\top}\Q\tx_{k}+2\tx_{k}^{\top}K^\top \R \lp \uk-F\hr \rp.\label{eq:ref_discrete_vdot}
\end{align}
Given a set of observations $\{u_k,y_k\}_{k=0,\dots,N}$, and denoting by $Y$ as the array defined in \eqref{eq:bigY}, then for a fixed arbitrary reference $\hr\neq0$, the model-free reference tracking problem finds the triple $\lp P,K,F\rp$ that satisfies \eqref{eq:Vdot_ref_track}, using the constrained optimization problem:
\begin{equation}
    \begin{cases}
        \underset{L,K,F,\tilde X}{\min} & \left\Vert \text{vec}\left(\tilde Y\right)-\text{vec}\left(\tilde X\right)\right\Vert _{2}^{2}\\
        \\
        \underset{\forall k=0,\ldots,N-1}{\text{subject to}} & \frac{2}{\Delta t}\lp\tx_{k}^{\top}L^{\top}L\tx_{k+1}-\tx_{k}L^{\top}L\tx_{k}\rp\\
        & -\tx_{k}^{\top}K^\top \R K\tx_{k}\\
        & +\tx_{k}^{\top}\Q\tx_{k}-2\tx_{k}^{\top}K^\top \R\lp \uk-F\hr \rp=0,
    \end{cases}\label{eq:NLP_ref}
\end{equation}
where $\tilde Y = Y - H\hr$ and $\tilde X:=\left[\tx_{0},\ldots,\tx_{N}\right]$. As before, $P$ is found from $P = L^\top L$.

\begin{figure}[ht]
    \centerline{\includegraphics[width=\linewidth]{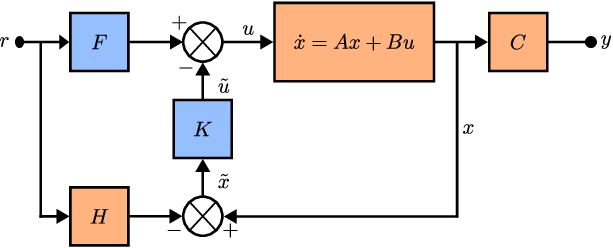}}
    \caption{Solving the reference tracking NLP \eqref{eq:NLP_ref} recovers the LQR feedback gain $K$ and feedforward gain $F$, both shown in blue. $H$ is a given matrix that defines the tracking  such that $y=r$. The reference tracking controller is implemented by using the controller $u=-K\left(x-Hr\right)+Fr$.}
    \label{fig:ref_track_blkdiag}
\end{figure}

\section{Mixed-Model Controller Design}\label{sec:mixed model optimality}

The goal of the mixed-model problem is to find the optimal LQR controller for systems in which the part of the dynamics are known, but the rest remain unknown. For instance an aircraft where the flight control surfaces are manipulated by an actuator with known dynamic response. In such scenarios, a mixed-model can be constructed that integrates the known dynamics of the actuators with the remaining system of unknown dynamics. The mixed-models being considered here are restricted to systems where the known dynamics operate as outer-loop inputs (i.e. control actuators) into the unknown dynamics.

Denote by $\gamma\in\Rp$ the state vector of the actuator system that behaves according to dynamics
\begin{equation}
    \begin{cases}
        \dot{\gamma}(t) = \hat A \gamma(t) + \hat B \hu(t) \\
        \gamma\lp 0\rp = \gamma_0, \\        
    \end{cases} \label{eq:mixed_LTI_known}
\end{equation}
almost everywhere $t\in[0,T]$. The system matrices $\hat A$ and $\hat B$ are assumed to be known. The unknown system in \eqref{eq:LTIsys} takes as input $\gamma$ and is written as:
\begin{equation}
        \dot{x}(t) =  Ax(t) + B\gamma(t). \\     
 \label{eq:mixed_LTI_unknown}
\end{equation}
Define $z = [x;\gamma] \in\mathbb{R}^{n+p}$ as the state vector of the joint system comprising of both unknown and known dynamics. Then the dynamics of \eqref{eq:mixed_LTI_unknown} is written as
\begin{align}
    \dot z(t) = \begin{bmatrix}
        \dot x(t) \\ \dot \gamma(t)
    \end{bmatrix}
    & = \begin{bmatrix}
        A & B \\ 0 & \hat A
    \end{bmatrix}z(t) + \begin{bmatrix}
        0 \\ \hat B
    \end{bmatrix}u(t) \nonumber\\
    & = \tilde A z(t) + \tilde B u(t),
    \label{eq:zdot}
\end{align}
with $z(0) = z_0 = [x_0;\gamma_0]$.

The objective of the mixed-model LQR problem is to find the gain $K$ such that it optimizes
\begin{align}
J\lp  z_{0}, \hu\lp\cdot\rp\rp &=\int_{0}^{\infty} z\lp t\rp^{\top}\Q z\lp t\rp+ \hu\lp t\rp^{\top}\R \hu\lp t\rp dt.\label{eq:infinite cost funct mixed}
\end{align}
Given a trajectory pair $[0,T]\ni t\mapsto \{z(t),\hu(t)\}$ that is a solution to \eqref{eq:zdot}, then Theorem \ref{thm: main thm} applies to \eqref{eq:infinite cost funct mixed} and gives
\begin{equation}
    \begin{split}
            \dot{V}\lp z \lp t\rp\rp &  =z\lp t\rp^{\top}P\tilde BR^{-1}\tilde B^\top Pz\lp t\rp\\ 
            &-z\lp t\rp^{\top}Qz\lp t\rp+2z\lp t\rp^{\top}P\tilde B \hu.
    \end{split}
    \label{eq:Vdot_mixed}
\end{equation}
Since $\tilde B$ is given, \eqref{eq:Vdot_mixed} is written without replacing $P\tilde B$ for $K^\top R$ and only $P$ is left to be found. When the same finite different approximation  detailed in Section \ref{sec:data driven control} is applied to \eqref{eq:Vdot_mixed}, the following equality holds for all $k=0,\dots,N-1$:
\begin{align}
\frac{2}{\Delta t} & \lp z_{k}^{\top}Pz_{k+1}-z_{k}^{\top}Pz_{k}\rp  = \nonumber \\ 
& z_{k}^{\top}P\tilde BR^{-1}\tilde B^\top Pz_{k}-z_{k}^{\top}Qz_{k}+2z_{k}^{\top}P\tilde B \hu_k.\label{eq:mixed_discrete_vdot}
\end{align}

Given a set of observations $\{\hu_k,y_k\}_{k=0,\dots,N}$ and known system dynamics $\hat A$ and $\hat B$, the mixed-model LQR problem finds $P=L^\top L$ using the constrained optimization problem:
\begin{equation}
\begin{cases}
\underset{L,Z}{\min} & \left\Vert \text{vec}\lp Y\rp-\text{vec}\lp Z\rp\right\Vert _{2}^{2}\\
\\
\underset{\forall k=0,\ldots,N-1}{\text{subject to}} & \frac{2}{\Delta t}\lp\zk^{\top}L^{\top}L\zkp-\zk^{\top}L^{\top}L\zk\rp\\
 & -\zk^{\top}L^{\top}L\tilde B\R^{-1}\tilde B^{\top}L^{\top}L\zk\\
 & +\zk^{\top}\Q\zk-2\zk^{\top}L^{\top}L\tilde B\hu_k=0 \\
 \\
 & \frac{1}{\Delta t}\lp\gamma_{k+1} - \gamma_k\rp = \hat A \gamma_k + \hat B \hu_k.
\end{cases}\label{eq:NLP-mixed}
\end{equation}
The optimization variable $Z = [X; \Gamma]$ is the stacked array of unknown states $X$ and known states $\Gamma$, where $\dot\gamma(t)$ is enforced by including an additional constraint which is based on the same finite difference approximation as \eqref{eq:finte diff in V} applied to \eqref{eq:mixed_LTI_known}. The mixed-model LQR controller is determined from $K = R^{-1}\tilde B^\top P$.

\subsection{Mixed-Model with Reference Tracking}

The mixed-model LQR problem can be updated to accommodate reference tracking in a process analogous to that of Section \ref{sec: ref tracking}. Defining the variable $\tilde z(t) = z(t) - Hr$ the constraint \eqref{eq:mixed_discrete_vdot} is updated to
\begin{align}
\frac{2}{\Delta t} & \lp \tilde z_{k}^{\top}P\tilde z_{k+1}-\tilde z_{k}^{\top}P\tilde z_{k}\rp  = \nonumber\\ 
& \tilde z_{k}^{\top}P\tilde BR^{-1}\tilde B^\top P\tilde z_{k}-z_{k}^{\top}Q\tilde z_{k}+2\tilde z_{k}^{\top}P\tilde B \lp\hu_k-F\hat r\rp,\label{eq:mixed_track_discrete_vdot}
\end{align}
where $\hat r$ is a fixed arbitrary reference and $F$ is the desired feedforward gain. The mixed-model reference tracking controller is constructed by $\hu(t) = -R^{-1}\tilde B^\top P \lp x - Hr\rp + Fr$.

\section{Results}\label{sec: results}

Two results cases will be presented. First, the solution methodology will be verified with a known linear system using the workflow first detailed in \cite{bowerfind2025model} and presented in Figure \ref{fig:flowchart}. Data is generated from a linear system and a model-free reference tracking controller is synthesized using the NLP \eqref{eq:NLP_ref}. The state feedback gain and reference tracking feedforward gain which are computed from the model-free method are denoted as $\hat K$ and  $\hat F$, respectively. A comparison is offered between the model-free controller and the classical LQR solution, computed when $A$ and $B$ are given and the ARE can be solved directly. The true LQR gain matrix is denoted as $K^*$. The true feedforward gain matrix, computed by the methods in \cite{franklinFeedbackControlDynamic2018} is denoted as $F^*$. Additionally, the same verification process will be accomplished for a system with a known mixed-model structure. That is, known actuator models will be introduced into the linear system and the mixed-model reference tracking NLP \eqref{eq:NLP-mixed} will be solved. 

\begin{figure}[ht]
    \centerline{\includegraphics[width=\linewidth]{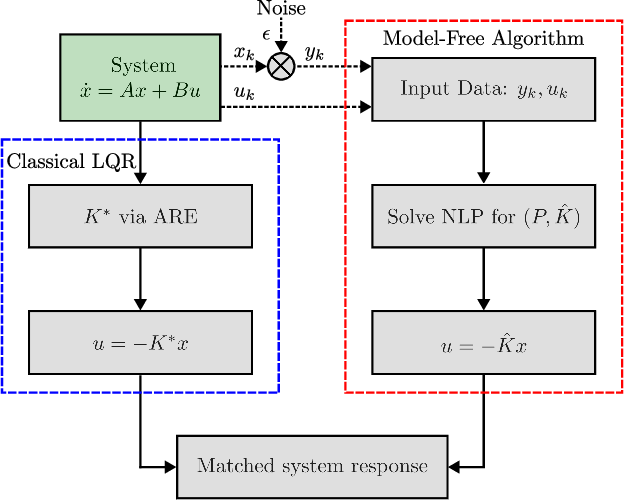}}
    \caption{The testing paradigm for Example~\ref{subsec: known_linear_results}}
    \label{fig:flowchart}
\end{figure}

Second, the model-free controller is applied to a real-world flight control problem. A model-free controller is designed to stabilize the roll angle on a subscale aircraft with unknown, nonlinear dynamics. Specifications of the test aircraft and flight instrumentation are provided along with a brief discussion about flight mechanics, which is used to facilitate the controller design process. The objectives and results of the data collection test flight are presented, and validate the model-free control alogorithm.

All data processing and optimization tasks were implemented using MATLAB R2025b.  For the optimization problems, the NLP code IPOPT \cite{wachter2006implementation} was used, where
the constraint Jacobian was computed via automatic differentiation using the methods of \cite{andersson2019casadi}. The NLP was initialized by setting $X = Y\in\mathbb{R}^{n\times N} $, $L = I \in \Rnn$, $K = {\mathbf{1}} \in \Rmn$ and $F = \mathbf{1}\in\mathbb{R}^{n\times q}$.
 
\subsection{Known Linear System} \label{subsec: known_linear_results}

The first test case is the linearized lateral-directional equations of motion for an aircraft \cite[Sec. 10.3, p. 572]{schmidtModernFlightDynamics2012}. The coefficients for this fourth-order model are from \cite[App. B, p. 819]{schmidtModernFlightDynamics2012} and represent a McDonnell Douglass A-4D Skyhawk traveling at 577 ft/s (Mach 0.6) at an altitude of 35,000 ft. The lateral state vector is defined as $x = \begin{bmatrix} \beta & \phi & p & r\end{bmatrix}^\top$, which corresponds to the aircraft sideslip angle, roll angle, and body-centric roll and  yaw rates. The input is $u = \begin{bmatrix} \delta_a & \delta_r\end{bmatrix}^\top$, which models the aileron and rudder deflections. The linearized lateral dynamics, in terms of the aerodynamic coefficients provided in Appendix \ref{app: A4_data}, are

\begin{align} \label{eq:lateral_dynamics}
    \dot x = & \begin{bmatrix}
        \frac{Y_\beta}{U_0} & \frac{g}{U_0} & \frac{Y_p}{U_0} & \lp \frac{Y_r}{U_0} - 1\rp \\
        0 & 0 & 1 & 0 \\
        L_\beta  & 0 & L_p  & L_r  \\
        N_\beta  & 0 & N_p  & N_r  \\
    \end{bmatrix} x +
     \begin{bmatrix}
         \frac{Y_{\delta_a}}{U_0} & \frac{Y_{\delta_r}}{U_0} \\
         0 & 0\\
         L_{\delta_a} & L_{\delta_r}\\
         N_{\delta_a} & N_{\delta_r}\\
     \end{bmatrix} u.
\end{align}

The controller is designed to track roll angle commands which implies $H = \begin{bmatrix}0 & 1& 0 &0\end{bmatrix}^\top$. A constant reference of $\hat r = 5^\circ$ was selected for use in \eqref{eq:NLP_ref}. Data was generated by perturbing the open-loop dynamics \eqref{eq:lateral_dynamics} with simultaneous linear-frequency chirp signals on the two input channels, with the parameters for each signal provided in Table \ref{tab:A4_chirp_params} in Appendix \ref{app: linear_sys_data}. The state measurement and control input data $\{y_k,u_k\}$, provided in Figure \ref{fig:A4_roll_data} in Appendix \ref{app: linear_sys_data}, were sampled at 40 Hz for $T=30$ seconds. The user-supplied LQR weight matrices were $\Q = \text{diag}\begin{bmatrix} 1&5&2&1\end{bmatrix}^\top$ and $\R = \text{diag}\begin{bmatrix} 1&1\end{bmatrix}^\top$. 

Supplying $\Q$, $\R$, $H$, $\hat r$, and the pair $\{ \yk,\uk\}$ as inputs into the model-free reference tracking optimization problem \eqref{eq:NLP_ref}, the optimal closed-loop model-free controller was computed as
\begin{align}
    \hat K = \begin{bmatrix} -1.9841 & 2.0619 & 1.3834 & 0.5234 \\
                             0.1442 & 0.8597 & 0.5376 & -0.9944\end{bmatrix},
    \label{eq:K_MF_linear}
\end{align}
with the feedforward gain found to be
\begin{align}
\hat F = \begin{bmatrix}
    0.0251 \\ -0.0522
\end{bmatrix}.
    \label{eq:Nbar_MF}
\end{align}

The known linear dynamics were used to compute the true LQR solution to compare performance between the model-free controller and the LQR controller. Using the same $\Q$ and $\R$, MATLAB's LQR solver was used to solve the ARE and recover the true LQR controller as
\begin{align}
    K^* = \begin{bmatrix} -1.9720 & 2.0435 & 1.3938 &  0.6144 \\
                               0.1676 & 0.8942 & 0.5429 & -1.0581\end{bmatrix}.
    \label{eq:K_lqr_linear}
\end{align}
The true feedforward gain
\begin{align}
F^* = \begin{bmatrix}
    0.0313 \\ -0.0584
    \end{bmatrix},
    \label{eq:Nbar_opt}
\end{align}
and is found from $F^* = N_u + K\lp N_x - H\rp$, where $N_u$ and $N_x$ come from the following relation \cite[Sec. 7.5.2, p. 1121]{franklinFeedbackControlDynamic2018}:
\begin{align}
    \begin{bmatrix}A&B\\C&D\end{bmatrix}\begin{bmatrix}N_x\\N_u\end{bmatrix} =\begin{bmatrix}0\\1\end{bmatrix}.
\end{align}
Comparing \eqref{eq:K_MF_linear} and \eqref{eq:K_lqr_linear} affirms nearly identical resulting controllers. Any numerical discrepancies are a result of the finite difference approximation used in \eqref{eq:finte diff in V}. The closed-loop responses of each controller were simulated to explore the impact of any numerical deviations on the controller performance. Figure \ref{fig:A4_roll_test} shows the response of the system to $\pm10^\circ$ roll angle reference commands and verifies the model-free controller behaves closely to the ground-truth controller.

\begin{figure}[ht]
\centerline{\includegraphics[width=\linewidth]{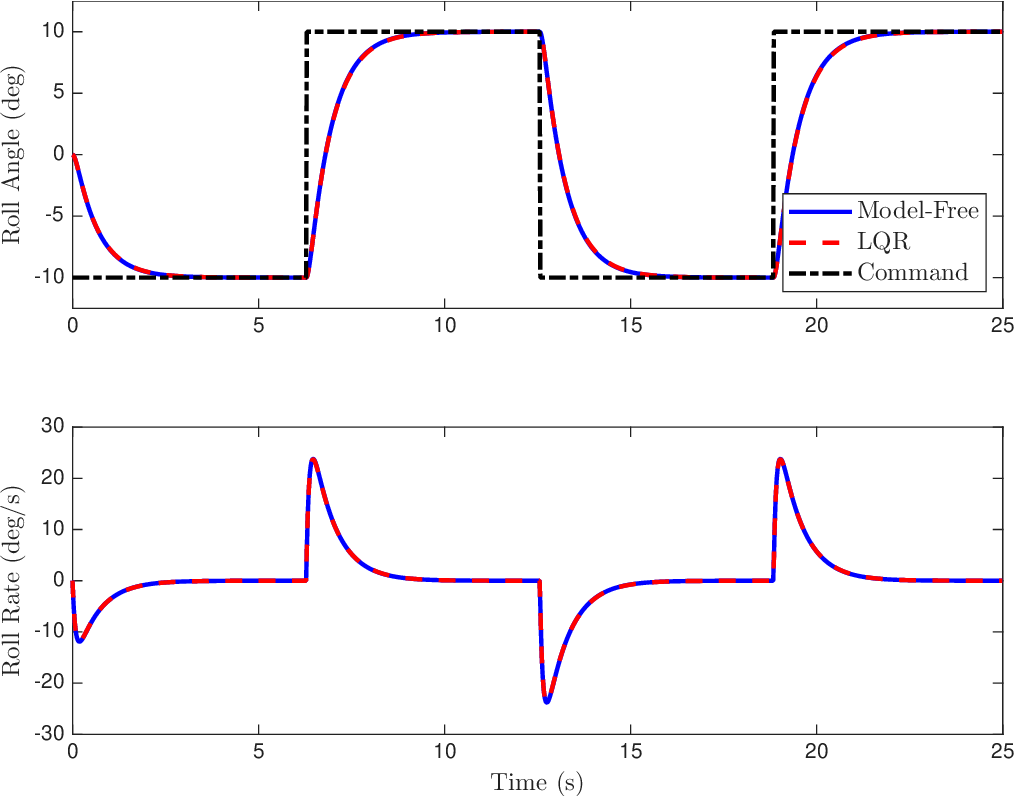}}
    \caption{The closed-loop simulation of A-4D lateral dynamics subject to roll angle tracking commands. The true LQR solution with $K^*$ is shown with the dashed red line. The model-free controller response using $\hat K$ is shown with the solid blue line. Roll angle tracking commands are indicated with the dashed black line.}
    \label{fig:A4_roll_test}
\end{figure}

Next, the mixed-model reference tracking will be validated. In this problem, known actuator models are augmented onto the lateral-direction dynamics to simulate servo dynamics inherent to the aileron and rudder assembly. The two actuator states, denoted as $\gamma = \begin{bmatrix} \alpha_{a} & \alpha_{r}\end{bmatrix}^\top$, which indicate the achieved state of the actuator, are appended onto the existing state vector so that  $z = \begin{bmatrix} x; \gamma\end{bmatrix}\in\mathbb{R}^6$. The control vector becomes the actuator command signal with $u = \begin{bmatrix} \delta_{a} & \delta_{r}\end{bmatrix}^\top$. A simple first-order model is chosen for each actuator, which gives the known state and control matrices as
\begin{align}
    \hat A & = \begin{bmatrix} -1\divslash \tau_a & 0 \\ 0 &-1\divslash \tau_r\end{bmatrix},
    \hat B & = \begin{bmatrix} 1\divslash \tau_a & 0\\0 & 1\divslash \tau_r\end{bmatrix}
\end{align}
where $\tau_a=0.05$s and $ \tau_r = 0.1s$. 

The mixed-model open loop dynamics were perturbed using a linear-frequency chirp signal, with parameters provided in Table \ref{tab:A4_chirp_params} in Appendix \ref{app: linear_sys_data}. Data was collected at 20 Hz for $T=30s$. See Figure \ref{fig:A4_roll_mixed_data} in Appendix \ref{app: linear_sys_data} for the state and control data pair $\{y_k,u_k\}$. The user defined LQR weight matrices were selected with $\Q = \text{diag}\begin{bmatrix} 1&5&2&1&10&10\end{bmatrix}^\top$ and $\R = \text{diag}\begin{bmatrix} 1&1\end{bmatrix}^\top$. 

Solving the  mixed-model reference tracking NLP \eqref{eq:NLP-mixed}, computes the optimal mixed-model closed-loop controller as
\begin{align}
    \hat K= \begin{bmatrix}-5.5911 & -0.1817 \\
                            2.0896 & 0.5785 \\
                            1.6535 & 0.2802 \\
                            1.7759 & -1.8825 \\
                            2.7411 & 0.0442 \\
                            0.0884 & 1.9989 \\\end{bmatrix}^\top.
    \label{eq:K_MF_mixed_linear}
\end{align}
The optimal mixed-model feedforward gain was computed to be
\begin{align}
    \hat F= \begin{bmatrix} 0.1047 \\
                            -0.0947 \\
                            \end{bmatrix}.
    \label{eq:N_MF_mixed_linear}
\end{align}

Similarly, MATLAB's LQR solver was used to compute the true LQR controller as
\begin{align}
    K^* = \begin{bmatrix}-5.4475 & 0.0135 \\
                                2.0904 & 0.6107 \\
                                1.6204 & 0.3081 \\
                                1.7052 & -1.8357 \\
                                2.5244 & 0.0199 \\
                                0.0398 & 2.5236 \\\end{bmatrix}^\top. 
    \label{eq:K_lqr_mixed_linear}
\end{align}
The true feedforward gain was calculated to be
\begin{align}
    F^* = \begin{bmatrix} 0.0877 \\
                          -0.1019 \\
                        \end{bmatrix}. 
    \label{eq:F_lqr_mixed_linear}
\end{align}

Numerically, the mixed-model controller \eqref{eq:K_MF_mixed_linear} and the true LQR controller \eqref{eq:K_lqr_mixed_linear} very closely related with the small discrepancies attributable to derivative approximations and inadequate open-loop perturbation bandwidth. The closed loop performance, presented in Figure \ref{fig:A4_roll_mixed_test}, is the best indicator of the validity of the mixed-model controller and exhibits excellent performance compared to the true LQR solution.

\begin{figure}[t]
\centerline{\includegraphics[width=\linewidth]{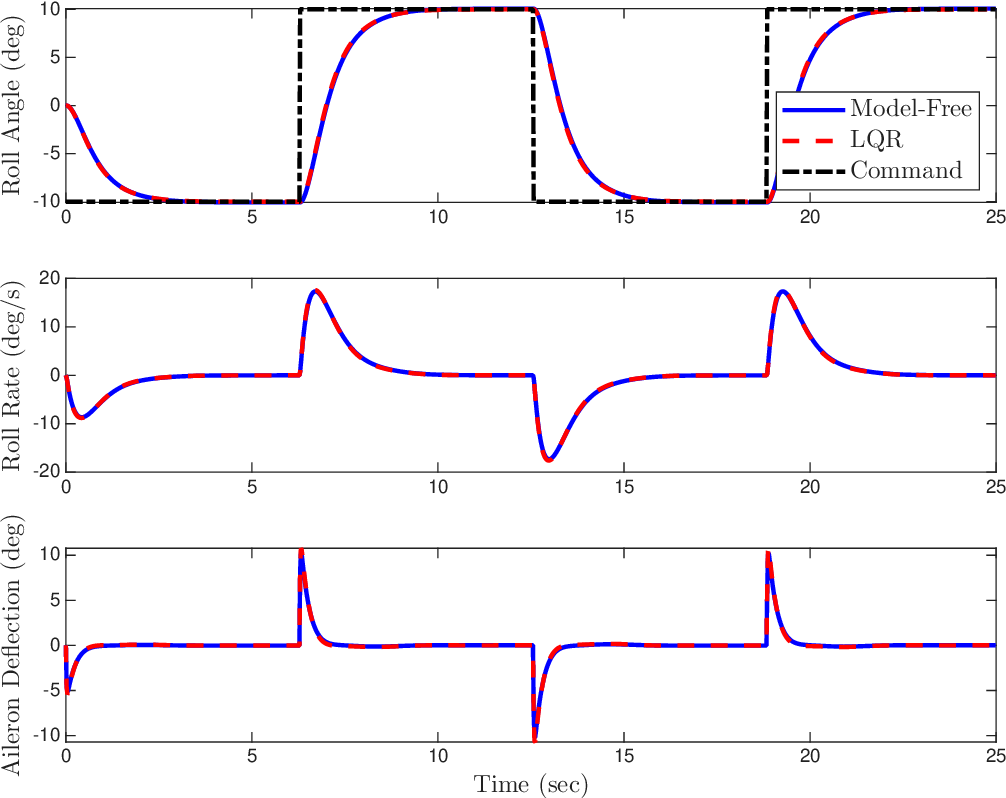}}
    \caption{The closed-loop simulation of A-4D lateral dynamics with  additional control surface actuator dynamics. The ground truth, generated by $\hat K$, is shown with the dashed red line. The mixed-model controller $K^*$ produces the response shown with the solid blue line. The desired roll angle tracking commands are shown with the dashed black line.}
    \label{fig:A4_roll_mixed_test}
\end{figure}

\subsection{Model-Free Controller Flight Test Program}
The capability to design a stabilizing controller using data collected directly from a real-world system with unknown, nonlinear dynamics will be demonstrated using a sub-scale aircraft model. This example showcases the versatility of the approach, especially on systems that are unknown, difficult to linearize, or have high dimensional state spaces.

\subsubsection{Test Aircraft Specifications} \label{sec: results-aircraft}
 Flight tests were conducted using a Freewing AL37 Airliner. This $1/19^{th}$ scale Boeing 737-inspired fuselage was manufactured using Expanded Polyolefin (EPO) foam and stiffened with carbon fiber wing spars. The aircraft utilizes conventional fixed-wing aerodynamic control surfaces, which include of a pair of ailerons mounted on the main wing to provide primary roll authority, an elevator fixed to the horizontal stabilizer to generate pitching moments, and a rudder on the vertical stabilizer to drive yaw dynamics. All aerodynamic control surfaces are actuated with  independent servos. Propulsion is achieved with dual $70mm$ brushless electric ducted fans (EDFs) integrated beneath each wing, driven by a single 6000 mAh battery and regulated by a 60A ESC for each EDF. 

\begin{figure}[ht]
    \centerline{\includegraphics[width=1\linewidth]{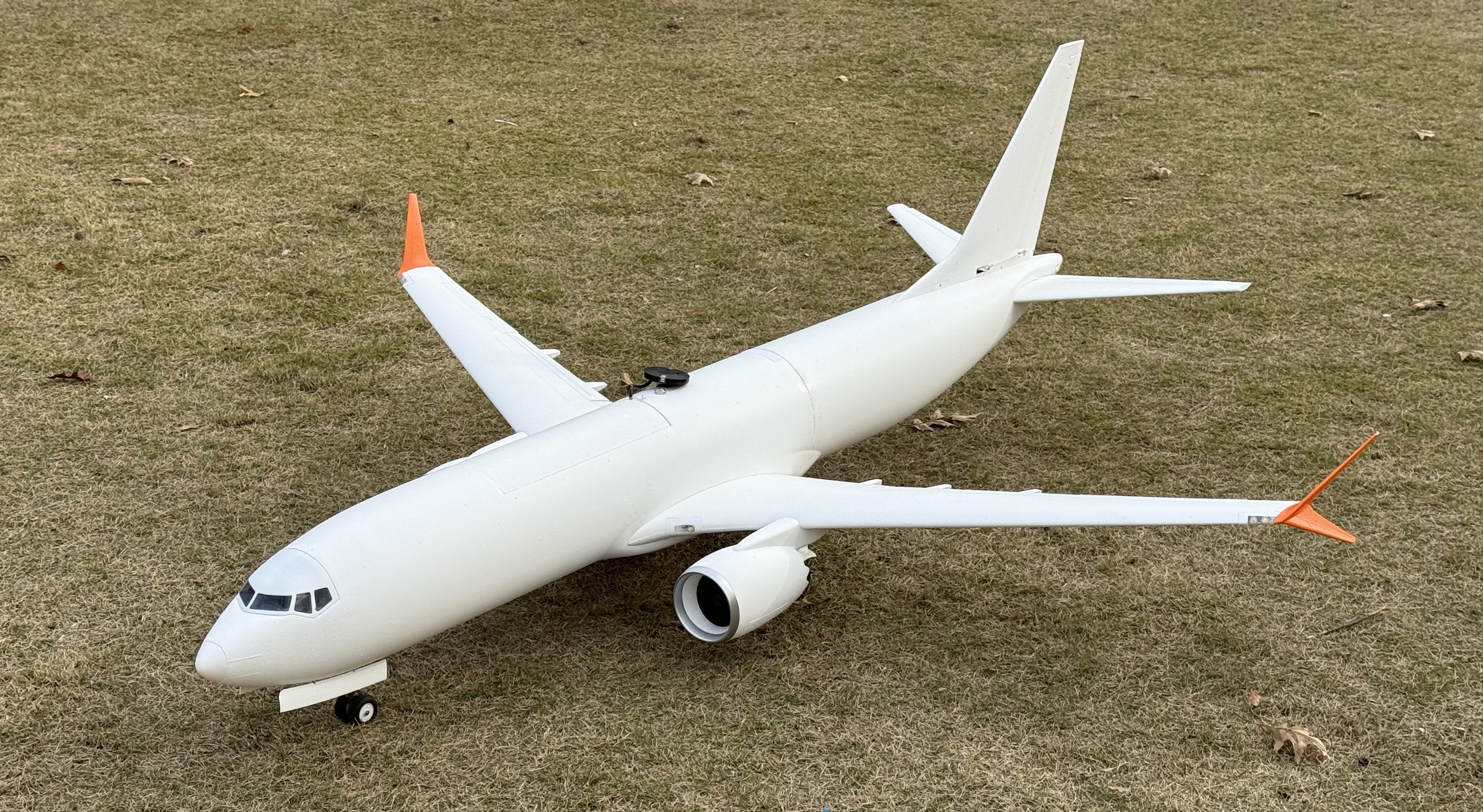}}
    \caption{The Freewing AL37 test aircraft.}
    \label{fig:AL37}
\end{figure}

\begin{table}[ht]
\centering
\caption{Freewing AL37 Specifications \cite{al37}}
\label{tab:AL37 specs}
    \begin{tabular}{|c|c|}\hline
        \multicolumn{2}{|c|}{Dimensions}\\
        \hline \hline
        Wingspan &  1830mm\\
        Length& 2000mm\\
        Wing area& 3600mm$^2$\\
        Weight (including battery and electronics)& 4542g\\
        \hline \hline
        
        \multicolumn{2}{|c|}{Componenents}\\
        \hline\hline
        Motor& 2952-2100Kv\\
        ESC& 60A with 8A UBEC \\
        Battery& 6S 22.2V 6000 mAh \\
        Ducted fan& 70mm, 12-blade \\ 
        Control surface servo& FS31092/3 9g servo\\
        \hline      
\end{tabular}
\end{table}

\subsubsection{Flight Computer and Sensor Package} \label{sec: results-sensors}
The fuselage interior houses a custom instrumentation and flight computer package. The Pixhawk Orange Cube+ provides all onboard computing and data logging. All open-sourced sensor and autopilot modules have been disabled on the Cube+. The Orange Cube+ is used as the platform for external sensor integration, data logging, model-free flight control, and connection with a radio receiver. The radio receiver is linked to a Futaba T6K 2.4GHz handheld transmitter, and is used to pilot the aircraft when collecting data for the model-free algorithm or when the resulting controller is disengaged during testing.  MATLAB support packages were used to compile and deploy the methods presented in this paper.

Two sensors provide the primary measurement capabilities onboard the aircraft. The first is a Life Performance Research LPMS-CURS3 inertial measurement unit (IMU) mounted internally, coincident with the vehicle center of gravity. This 9-Axis IMU outputs bias-calibrated measurements from a three-axis gyroscope, a three-axis accelerometer, and a magnetometer. Orientation estimates are provided by the IMU by internally fusing the output of the component measurements using a Kalman filter.  Additional specifications of the IMU are provided in Table \ref{tab: imu data} and further information can be found from the product datasheet \cite{lpms-curs3}.

The second sensor is a CubePilot Here 3+ GPS system with the antenna externally mounted on the top of the fuselage to provide line of sight with the GPS satellite constellation. The Here 3+ sensor is configured for an update rate of 8Hz and provides latitude and longitude positioning as well as derived measurements that include ground speed, track heading, and north-east-down (NED) velocity.

\begin{table}[ht]
\centering
\caption{LPMS-CURS3 IMU \cite{lpms-curs3} and Here 3+ GPS \cite{here3} Specifications as Configured For Tests That Appear in Section \ref{sec: results-flight test}.}
\label{tab: imu data}
\begin{tabular}{|c|c|}\hline
        \multicolumn{2}{|c|}{Accelerometer}\\
        \hline \hline
        \multirow{1}*{Measurement range} &  $\pm4\,g$\\
        \multirow{1}*{Linear acceleration sensitivity} & $0.122\,mg$/LSB \\
        \multirow{1}*{Acceleration noise density} & 60$\,\mu g$/$\sqrt{\text{Hz}}$ \\
        \hline \hline
        
        \multicolumn{2}{|c|}{Gyroscope}\\
        \hline \hline
        \multirow{1}*{Measurement range} &  $\pm500$ \textdegree/s\\
        \multirow{1}*{Angular rate sensitivity} & $17.50$ mdps/LSB \\
        \multirow{1}*{Rate noise density} & 5 mdps/$\sqrt{\text{Hz}}$ \\
        \hline \hline      

        \multicolumn{2}{|c|}{Magentometer}\\
        \hline \hline
         \multirow{1}*{Measurement range} & $\pm8$ Gauss \\
         Sensitivity & $12000$ LSB/Gauss \\
        \hline \hline

        \multicolumn{2}{|c|}{GPS}\\
        \hline \hline
         Positioning accuracy & 2.5 m CEP (3D fix)\\
         Update rate & $8$ Hz  \\
        \hline
\end{tabular}
\end{table}
 
\subsubsection{Aircraft Dynamics Fundamentals} \label{sec: results-dynamics}
Although the model-free design philosophy of this study eliminated any modeling requirements on the AL37 test vehicle, an understanding of the fundamental equations of motion is crucial to identify the relevant states that should be used to appropriately control the vehicle dynamics. A brief discussion of the six-degree of freedom rigid body equations of motion will be presented since they provide a theoretical framework for the following model-free control application.

The Flat-Earth inertial reference frame, denoted as $e$, extends in the north-east-down (NED) directions. The body-fixed axis, referred to as $b$, follows standard conventions with the \textit{x}-axis extending forward out the nose of the aircraft, \textit{y}-axis pointing out the right wing side, and the \textit{z}-axis downward completing the triad which is fixed at the center of gravity of the aircraft. The dynamics encompass twelve states: the inertial NED position, represented as $P_e = [\begin{matrix} p_n & p_e & p_d\end{matrix}]^\top$, the Euler orientation angles, expressed as $\Phi = [\begin{matrix} \phi & \theta & \psi \end{matrix}]^\top$, the body-fixed velocity, denoted as $V_b = [\begin{matrix} u_b & v_b & w_b\end{matrix}]^\top$, and the angular velocity of the body in the body frame, denoted as $\omega_{b} = [\begin{matrix} p & q & r \end{matrix}]^\top$. The equations of motion of the aircraft are given by \cite{stevens_aircraft_2015}:
\begin{equation}
\begin{cases}
            \begin{aligned}
                \dot P_e & = C_{be}(\Phi) V_b\\
                \dot V_b & = \frac{1}{m}F_b - \omega_{b} \times V_b\\
                \dot \Phi & = W\lp \Phi\rp \omega_{b}\\
            \dot \omega_{b} & = I_b^{-1} \left[ M_b - \omega_{b} \times \lp I_b \omega_{b} \rp \right],
            \end{aligned}
        \label{eq:6dof trans}
    \end{cases}
\end{equation}
where $C_{be}(\Phi)$ is the rotation matrix that maps body frame to earth frame and
\begin{equation}
    \begin{aligned}
        W (\Phi) = \begin{bmatrix}
                1 & \sin\phi\tan\theta & \cos\phi\tan\theta\\
                0 & \cos\phi & -\sin\phi\\
                0 & \sin\phi/\cos\theta & \cos\phi/\cos\theta
            \end{bmatrix}. 
    \end{aligned}
\end{equation}
The quantities $F_b$ and $M_b$ are the forces and moments applied to the airframe. For aircraft, these tend to be nonlinear functions of the state vector that include aerodynamic, gravitational and propulsive factors. Without knowing these complex effects, especially that of aerodynamics, it is impossible to fully construct \eqref{eq:6dof trans}, and therefore this example showcases the contribution of the proposed model-free algorithm.   

\subsection{Model-Free Controller Flight Test}\label{sec: results-flight test}
The model-free controller synthesis is demonstrated with three primary steps. First, conduct a flight test, with flight stabilization disabled, that records manual pilot control inputs and measures appropriate vehicle states using the IMU. Second, use the flight test data to solve the NLP \eqref{eq:NLP}. Lastly, perform a test flight to validate the stability and performance of the model-free LQR controller.

\subsubsection{TF1: Data Collection Test Flight}
The initial data collection test flight, designated as TF1, was executed with all onboard stabilization systems deactivated and manually operated by a human pilot. The objective of TF1 was to record IMU measurements and pilot commands as the plane was flown in an approximate oval racetrack pattern around the airfield.  All flight tests were conducted at the Mr. John M. Sistrunk Memorial Flying Field, which features a 700-foot grass runway and is located approximately 15 miles south of the Auburn University campus.

TF1 recorded IMU measurements at $50$Hz during a $120s$ test execution window. Data collected during the takeoff and landing flight phases was discarded since the vehicle was not operating in a cruise flight condition. The model-free LQR controller is similar to classical control design techniques in that data should be collected in a similar flight condition that the flight controller will later operate in.

\subsubsection{Model-Free Roll Controller Synthesis}
Designing a roll controller about trimmed, straight-and-level flight allows for simplification of the complete nonlinear dynamics by decoupling the lateral dynamics from the longitudinal dynamics. As such, only IMU measurements of the roll angle $\phi$ and roll rate $p=\dot \phi$ are required. These two measurements alone are sufficient to design a stabilizing roll controller, however, an additional integrator state, $\int\phi$, was added to regulate any nonzero steady-state error that may be caused by vehicle asymmetries. $\int\phi$ was obtained during postprocessing by numerically integrating the roll angle measurement signal.

The user-supplied LQR weight matrices were $\Q = \text{diag} \begin{bmatrix} 1.7748 & 0.3475 & 0.01\end{bmatrix}^\top$ and $\R = 10$.\footnote{These matrices were selected by comparing the magnitude of a hypothetical controller output under the data set $y$, and scaling $Q$ to avoid saturation (i.e. $u_{\min}\leq -KY\leq u_{\max}$).} Figure \ref{fig:roll_controller_data} presents the actual recorded measurements from TF1 that were used as the data supplied for the NLP solve. The three-state measurement vector was $Y = [\begin{matrix} \phi & \dot \phi & \int \phi\end{matrix}]^\top$ and the open-loop input perturbation $U = u_a$ was given by the raw pilot aileron commands. Solving problem \eqref{eq:NLP} computes the optimal model-free closed-loop controller
\begin{equation}
    \hat K = \begin{bmatrix} 0.4244 & 0.1264 & 0.0317\end{bmatrix}.
    \label{eq: K_MF_roll_controller}
\end{equation}

\begin{figure}[ht]
    \centerline{\includegraphics[width=\linewidth]{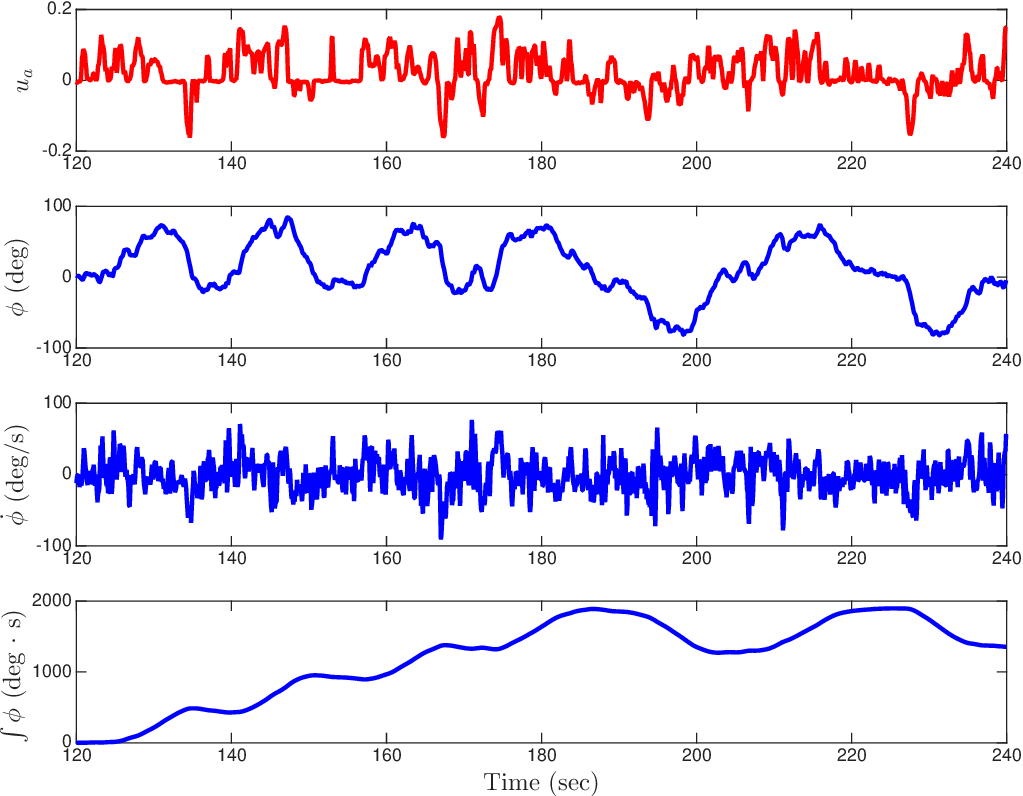}}
    \caption{The actual state measurements and pilot inputs from TF1 that were provided as inputs for the NLP problem \eqref{eq:NLP} used to compute $\hat K$. The top subplot displays the pilot aileron commands that were recorded directly from the handheld radio transmitter. The second and third subplots, of the roll angle and roll rate, depict IMU measurements. The integral of the roll angle measurement, shown in the fourth subplot, was computed via numerical integration of the roll measurement during postprocessing. }
    \label{fig:roll_controller_data}
\end{figure}

\subsubsection{TF2: Model-Free Controller Validation Flight}
The objective of the model-free controller evaluation test flight, designated as TF2, was to verify roll controller stability and expected performance. TF2 recorded IMU measurements at $50 Hz$ and GPS measurements at $8Hz$ during a $100s$ test execution window. Figure \ref{fig:TF2_track} depicts the TF2 ground track for which the model-free controller was engaged. The figure-eight flight path was planned so that the aircraft would maximize the time for which precise roll command tracking was required. Environmental conditions for TF2 were windy which induced the variations in groundspeed and altitude visible in Figure \ref{fig:TF2_track}.

\begin{figure}[ht]
\centerline{\includegraphics[width=\linewidth]{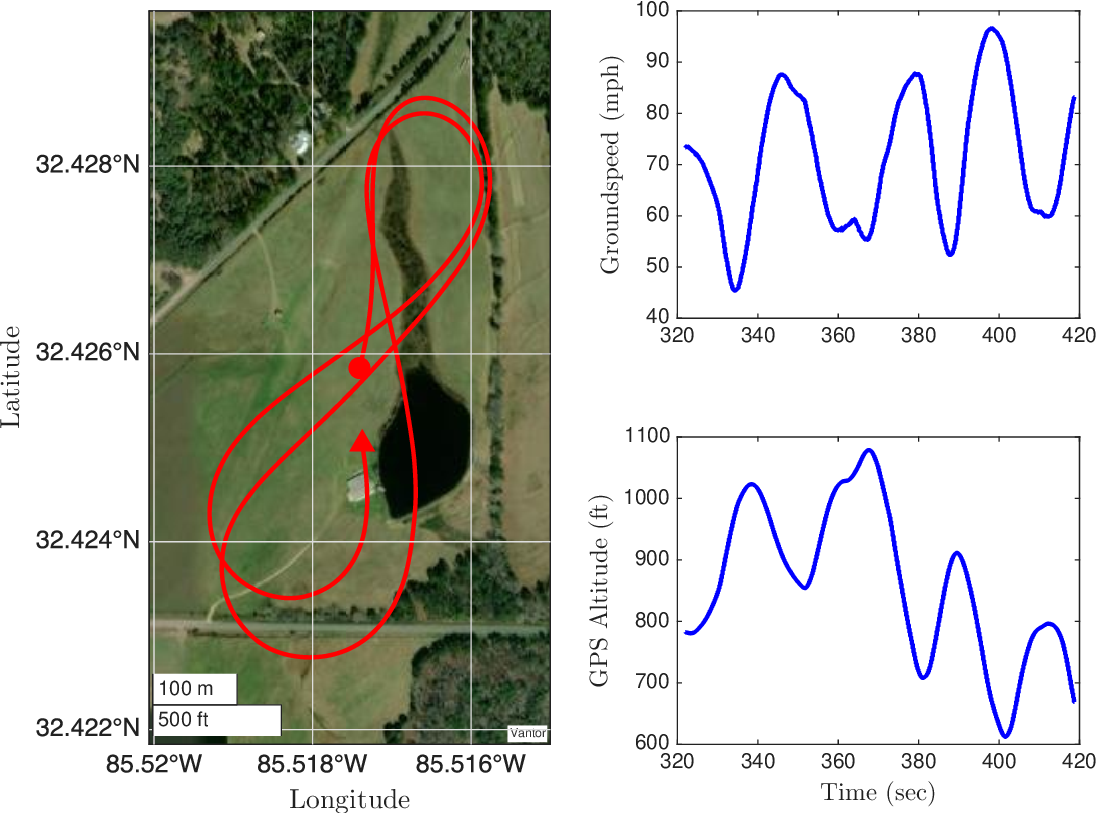}}
    \caption{GPS data from TF2 indicates that despite gusty flight operations, indicated by the significant groundspeed and GPS altitude variations, the pilot was able to achieve the desired figure-eight ground track aided by precise, intuitive performance of the model-free roll controller.}
    \label{fig:TF2_track}
\end{figure}

Figure \ref{fig:roll_controller_test} presents the roll angle tracking performance of the model-free controller \eqref{eq: K_MF_roll_controller}. The pilot roll commands, indicated in red, fluctuate as the pilot attempts to fly the figure-eight pattern in variable wind conditions. The true vehicle response, indicated in blue, clearly tracks the pilot commands. The controller successfully tracks a noisy input signal and achieves roll angle tracking with negligible steady-state error.

\begin{figure}[ht]
\centerline{\includegraphics[width=\linewidth]{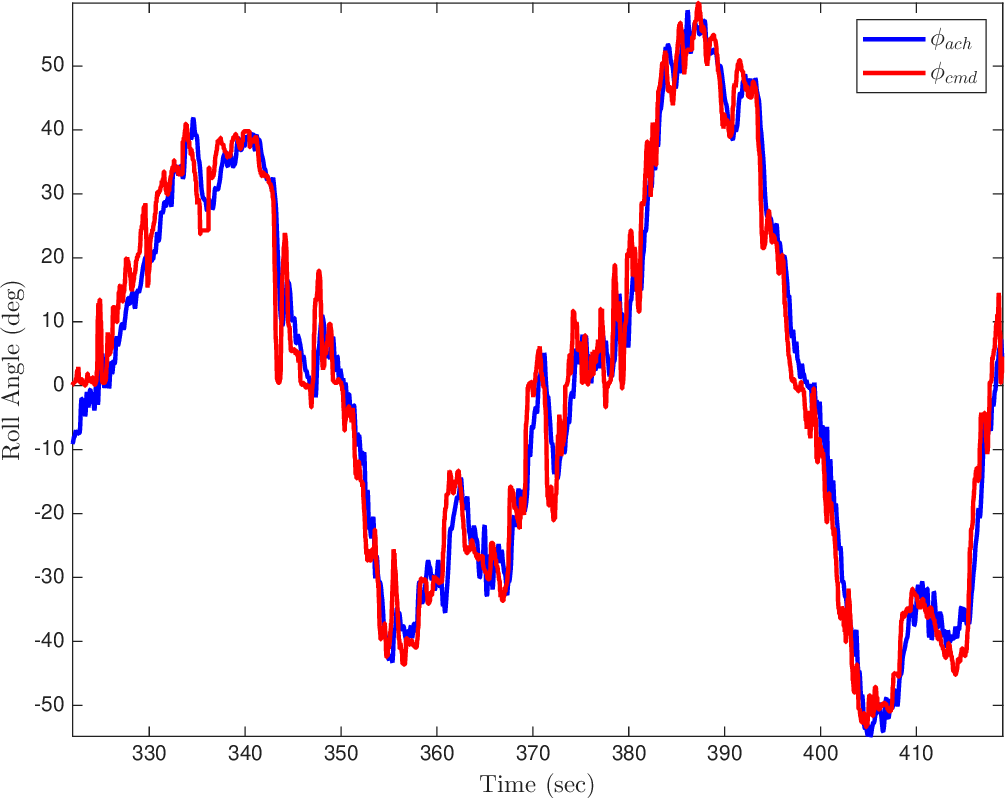}}
    \caption{The roll angle tracking performance of the model-free roll controller using $\hat K$. The pilot roll angle commands, shown with the red line, display a combination of high-frequency inputs, due to the windy flight condition, and large amplitude, low-frequency inputs used to fly the figure-eight trajectory. The achieved roll angle of the test aircraft, indicated with the blue line, very closely tracks the commanded roll angle indicating excellent tracking performance.}
    \label{fig:roll_controller_test}
\end{figure}

\section{Conclusion} \label{sec: conclusion}

A model-free approach to infinite horizon LQR synthesis directly from data collected of an unknown system is presented. A necessary condition for the value function is proved which gives a specific dynamic constraint along any arbitrary trajectory. An implicit NLP problem is constructed which is used to compute the optimal closed-loop LQR controller. The model-free approach is expanded to solve the model-free reference tracking LQR problem which solves for the optimal feedforward and feedback gains, simultaneously, by enforcing a similar dynamic constraint in an adapted NLP problem. Lastly, the model-free requirement is relaxed to include a broader class of systems for which part of the system dynamics is known, but the rest of the dynamics remain unknown.

Two simulation results are included to validate the approach relative to the ground-truth. Each example indicates that the performance of the model-free controller tracks performance of classical LQR controllers. 

Lastly, a real-world flight test program was conducted to design a roll controller for a subscale UAV with unknown dynamics. The data collection test flight, data-driven controller synthesis, and performance evaluation test flight are documented. Flight-tests confirm the excellent performance of the model-free controller during challenging flight operations.

Future avenues of study include the establishment of metrics for data collection that ensure reliable calculation of the controller. This would allow users to reduce the total amount of data required for accurate model-free controller synthesis. Additionally, the core algorithm presented here could be generalized to real-time model-free controller synthesis, which could form a robust adaptive controller that can stabilize the system under changing environmental and flight regime conditions.

\section*{Acknowledgement}
 
The authors thank Sean Gallagher, of Union Springs, AL, for skillfully piloting the AL37 aircraft for both data collection and later during flight controller validation. The authors also thank the Auburn Planesman R/C Flying Club for use of the airfield where all flight experiments occurred. The Auburn Planesman Club is a Charted Club (\#1580) of the Academy of Model Aeronautics.  

\appendices{}     

\section{Proofs From Section \ref{sec:Q-learning}}

Before providing a proof for Lemma \ref{lem:advantage}, we need the following supporting lemma, which is an adaptation of Lemma 6 in \cite{willems_least_1971}:

\begin{lemma}\label{lemma:2}
    Let $P$ be any symmetric solution of the algebraic Riccati equation \eqref{eq:ARE} for arbitrary $x$ and $u$ that satisfy \eqref{eq:LTIsys}. Then on the time interval $\tau\in\left[t,t+T\right]:$
\begin{align}
 & V\left(x\left(t\right)\right)\nonumber \\
 & +\int_{t}^{t+T}\left\Vert u\left(\tau\right)+R^{-1}B^{\top}Px\left(\tau\right)\right\Vert _{R}^{2}d\tau\nonumber \\
 & =V\left(x\left(t+T\right)\right)\nonumber \\
 & +\int_{t}^{t+T}x\left(\tau\right)^{\top}Mx\left(\tau\right)+u\left(\tau\right)^{\top}Ru\left(\tau\right)d\tau.\label{eq:lem for A}
\end{align}
\end{lemma}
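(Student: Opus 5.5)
The plan is a completing-the-square argument along the trajectory, followed by integration over $[t,t+T]$. Throughout I read $M$ as the state weight $\Q$ of \eqref{eq:infinite cost funct}, and $V(x)=x^\top P x$ with the given symmetric solution $P$ of \eqref{eq:ARE}. Positive semi-definiteness of $P$ is never used; only symmetry and \eqref{eq:ARE} are.

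The first step is the pointwise derivative. As in the proof of Theorem~\ref{thm: main thm}, $x(\cdot)$ is an absolutely continuous Carath\'eodory solution of \eqref{eq:LTIsys}. Hence $\tau\mapsto V(x(\tau))$ is absolutely continuous on $[t,t+T]$, being a smooth quadratic composed with an absolutely continuous map on a compact interval. Its derivative is given almost everywhere by \eqref{eq:implicit value function}:
\[
\dot V(x(\tau))=2x(\tau)^\top PAx(\tau)+2x(\tau)^\top PBu(\tau).
\]

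The second step is the algebraic identity. Writing $x,u$ for $x(\tau),u(\tau)$, I expand
\[
\left\Vert u+R^{-1}B^\top Px\right\Vert_R^2=u^\top Ru+2x^\top PBu+x^\top PBR^{-1}B^\top Px .
\]
The quadratic form $2x^\top PAx$ equals $x^\top(A^\top P+PA)x$, so \eqref{eq:ARE} gives
\[
2x^\top PAx=x^\top PBR^{-1}B^\top Px-x^\top \Q x .
\]
This is exactly \eqref{eq:Sub A out}, but now it is derived directly from \eqref{eq:ARE} for an arbitrary symmetric solution, rather than from the HJB argument. Substituting into the derivative gives, almost everywhere,
\[
\dot V(x(\tau))=\left\Vert u(\tau)+R^{-1}B^\top Px(\tau)\right\Vert_R^2-x(\tau)^\top \Q x(\tau)-u(\tau)^\top Ru(\tau).
\]

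The third step is integration. By the fundamental theorem of calculus for absolutely continuous functions, $\int_t^{t+T}\dot V\,d\tau=V(x(t+T))-V(x(t))$. Rearranging then yields \eqref{eq:lem for A}.

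The main obstacle is integrability. The integrals on the right must be finite, or the identity must be read in the extended sense. Measurability alone does not give $u\in L^2$. I would handle this as follows. The derivative $\dot V$ is integrable because $V\circ x$ is absolutely continuous. The term $x^\top\Q x$ is continuous, hence integrable on the compact interval. So the difference $\Vert u+R^{-1}B^\top Px\Vert_R^2-u^\top Ru$ is integrable. Each of these two terms is nonnegative, so they are either both finite or both infinite, and in either case the stated equality holds. Alternatively, one can simply assume $u\in L^2_{\mathrm{loc}}$, which is the standing requirement for the cost \eqref{eq:infinite cost funct} to make sense.
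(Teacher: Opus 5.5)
Your proposal is correct and follows essentially the same route as the paper. Both differentiate $x^\top P x$ along the trajectory, use \eqref{eq:ARE} to rewrite $2x^\top P(Ax+Bu)$ as $\|u+R^{-1}B^\top Px\|_R^2-u^\top Ru-x^\top Mx$, and integrate via the fundamental theorem of calculus; your remarks on the absolute continuity of $V\circ x$ and on the two nonnegative integrals being both finite or both infinite supply rigor the paper leaves implicit.
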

\begin{IEEEproof}
    Differentiation by parts of \eqref{eq:optimal value funct} yields
    \begin{equation}
        \begin{aligned}
            V(x(t+T)) - & V(x(t)) = \int_t^{t+T} \frac{d}{d\tau} \lp x(\tau)^\top Px(\tau)\rp d\tau \\
            & = \int_t^{t+T} 2x(\tau)^\top P \lp Ax(\tau) + Bu(\tau)\rp d\tau. 
        \end{aligned}
        \label{eq:proof2e1}
    \end{equation}
    The first equality follows from the fundamental theorem of calculus, and the second equality follows from \eqref{eq:LTIsys}. Applying the Riccati equation \eqref{eq:ARE}, the integrand becomes
    \begin{equation}
        \begin{aligned}
             2x^\top& P (Ax + Bu) \\
             & = x^\top (A^\top P + PA)x + u^\top B^\top Px + x\top PBu\\
             & = x^\top (PBR^{-1}B^\top P - M)x + u^\top B^\top Px + x^\top PBu\\
             & = \|u + R^{-1}B^\top Px\|^2_R - u^{\top}Ru - x^\top Mx.
        \end{aligned}
        \label{eq:proof2e2}
    \end{equation}
    Evaluating \eqref{eq:proof2e2} at $x(\tau)$ and $u(\tau)$  and substituting into the integrand of \eqref{eq:proof2e1} produces the correct result.
\end{IEEEproof}

We now give the proof of Lemma \ref{lem:advantage}.

\begin{IEEEproof}
For simplicity we define the quantity
\[
W\left(\tau\right):=\left\Vert u\left(\tau\right)+R^{-1}B^{\top}Px\left(\tau\right)\right\Vert _{R}^{2}.
\]
Using this notation, \eqref{eq:Advantage formula} can be written as
\begin{align*}
{\mathbb{A}}\left(t;x \lp \cdot \rp,u\left(\cdot\right)\right) & :=\int_{t}^{\infty}W\left(\tau\right)d\tau\\
 & =\int_{t}^{t+T}W\left(\tau\right)d\tau+\int_{t+T}^{\infty}W\left(\tau\right)d\tau.
\end{align*}
This implies that
\begin{equation}
{\mathbb{A}}\left(t;x \lp \cdot \rp,u\left(\cdot\right)\right)=\int_{t}^{t+T}W\left(\tau\right)d\tau+{\mathbb{A}}\left(t+T;x \lp \cdot \rp,u\left(\cdot\right)\right).\label{eq:Advantage proof}
\end{equation}
By the definition given in \eqref{eq:Advantage def},
\[
{\mathbb{A}}\left(t;x,u\left(\cdot\right)\right):=Q\left(t,x\left(t\right),u\left(\cdot\right)\right)-V\left(x\left(t\right)\right).
\]
This implies that we can write \eqref{eq:Advantage proof} as
\begin{align*}
Q\left(t;x\left(\cdot\right),u\left(\cdot\right)\right)-V\left(x\left(t\right)\right)= & \int_{t}^{t+T}W\left(\tau\right)d\tau\\
 & +Q\left(t+T;x\left(\cdot\right),u\left(\cdot\right)\right)\\
 & -V\left(x\left(t+T\right)\right).
\end{align*}
Rearranging the above equation and substituting from Lemma \ref{lemma:2} yields
\begin{align*}
Q\left(t;x\left(\cdot\right),u\left(\cdot\right)\right) &=  Q\left(t+T;x\left(\cdot\right),u\left(\cdot\right)\right)\\
 & + \int_{t}^{t+T} \Big\{x\left(\tau\right)^{\top}Mx\left(\tau\right)\\
 &\quad+u\left(\tau\right)^{\top}Ru\left(\tau\right) \Big\} d\tau,
\end{align*}
and we have arrived at our result. 
\end{IEEEproof}

\section{A-4D Skyhawk  Aerodynamic Data} \label{app: A4_data}

Table \ref{tab:A4_coefs} presents the lateral-directional dimensional derivatives which are the coefficients for the model \eqref{eq:lateral_dynamics} which represent an McDonnell Douglas A-4D Skyhawk. This model is used in the known linear systems test case Section \ref{subsec: known_linear_results}.
\begin{table}[ht]
    \caption{A-4D Lateral-Directional Derivatives at 35,000ft and 0.6 Mach \cite[App. B, p. 819]{schmidtModernFlightDynamics2012}.}
    \label{tab:A4_coefs}
    \centering
    \begin{tabular}{|c c |c c|} 
        \hline
       $U_0$, (ft/s)  & 577 & $L_r$ & 0.475\\
       $Y_\beta$  & -60.386 & $L_{\delta_a}$ & 8.170\\
       $Y_p$  & 0 & $L_{\delta_r}$ & 4.168\\
       $Y_r$  & 0 & $N_\beta$ & 6.35\\
       $Y_{\delta_a}$  & -0.4783 & $N_p$ & -0.025138\\
       $Y_{\delta_r}$  & 10.459 &  $N_r$ & -0.2468\\
       $L_\beta$ & -17.557 & $N_{\delta_a}$  & 0.5703\\
       $L_p$ & -0.761 & $N_{\delta_r}$  & -3.16\\
        \hline      
    \end{tabular}
\end{table}

\section{Known Linear System Input Data} \label{app: linear_sys_data}
The data generated for the known linear system test cases in Section \ref{subsec: known_linear_results} uses a linear frequency chirp signal for each of the two control input dimensions. The linear frequency chirp was computed using the function $u_k = \psi \sin\lp2\pi\lp\frac{c}{2}t_k^2 + f_0t_k\rp\rp$, where $c=\lp f_1-f_0\rp/T$. Parameters for the input signal used for the model-free and mixed-model cases are provided in Table \ref{tab:A4_chirp_params}.
\begin{table}[ht]
    \caption{Control-Input Signal Parameters}
    \label{tab:A4_chirp_params}
    \centering
    \begin{tabular}{|c|c|c|c|c|}
    \hline
     & \multicolumn{2}{c|}{ Model-Free} & \multicolumn{2}{c|}{Mixed-Model} \\
    \hline
    Parameter & $\delta_a$ & $\delta_r$ & $\delta_a$ & $\delta_r$\\
    \hline
     $\psi$ & 0.01 & 0.015  & 0.01 & -0.015 \\
     $f_0$  & 0.1  & 0.1    & 0.2  & 0.1\\
     $f_1$  & 0.8  & 0.5    & 0.4  & 0.5 \\
     $c$    & 0.01 & 0.0133 & 0.0067 & 0.0267 \\
     \hline 
    \end{tabular}
\end{table}
The state data generated for the model-free case is provided in Figure \ref{fig:A4_roll_data}.
\begin{figure}[ht]
\centerline{\includegraphics[width=\linewidth]{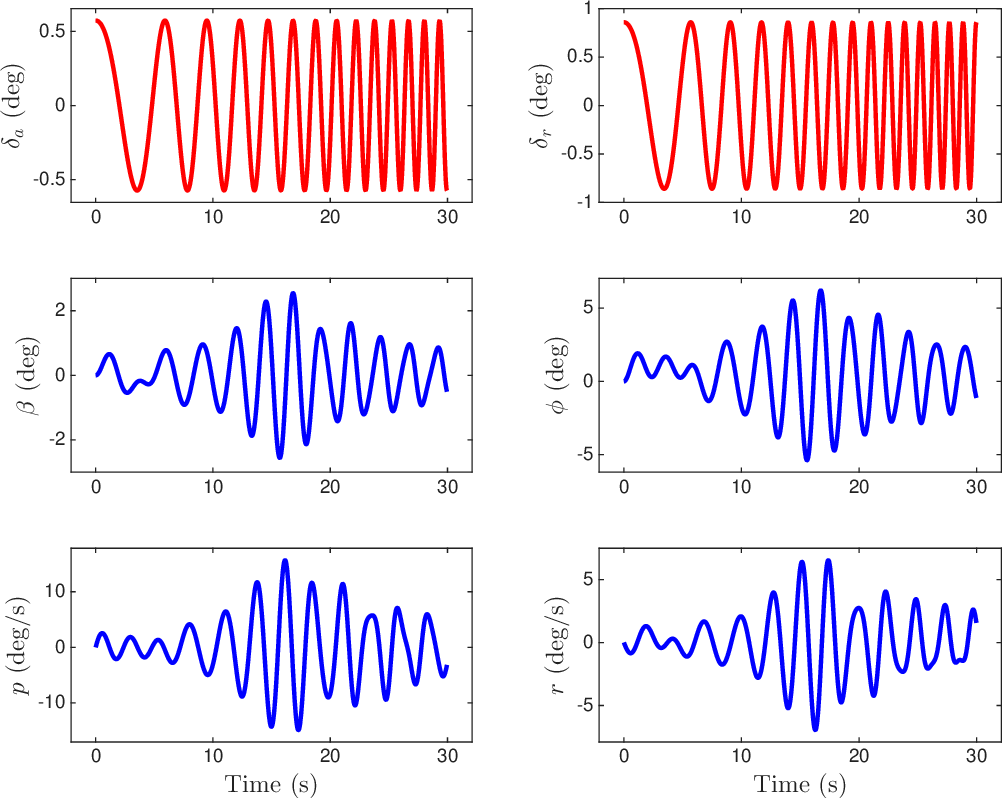}}
    \caption{The state input data $y_k$, shown in blue, and the control input signal $u_k$, shown in red, were supplied as inputs to the model-free NLP \eqref{eq:NLP_ref}. Data was sampled at $40Hz$ for $T=30s$.}
    \label{fig:A4_roll_data}
\end{figure}
The state data generated for the mixed-model case is provided in Figure \ref{fig:A4_roll_mixed_data}.
\begin{figure}[ht]
\centerline{\includegraphics[width=\linewidth]{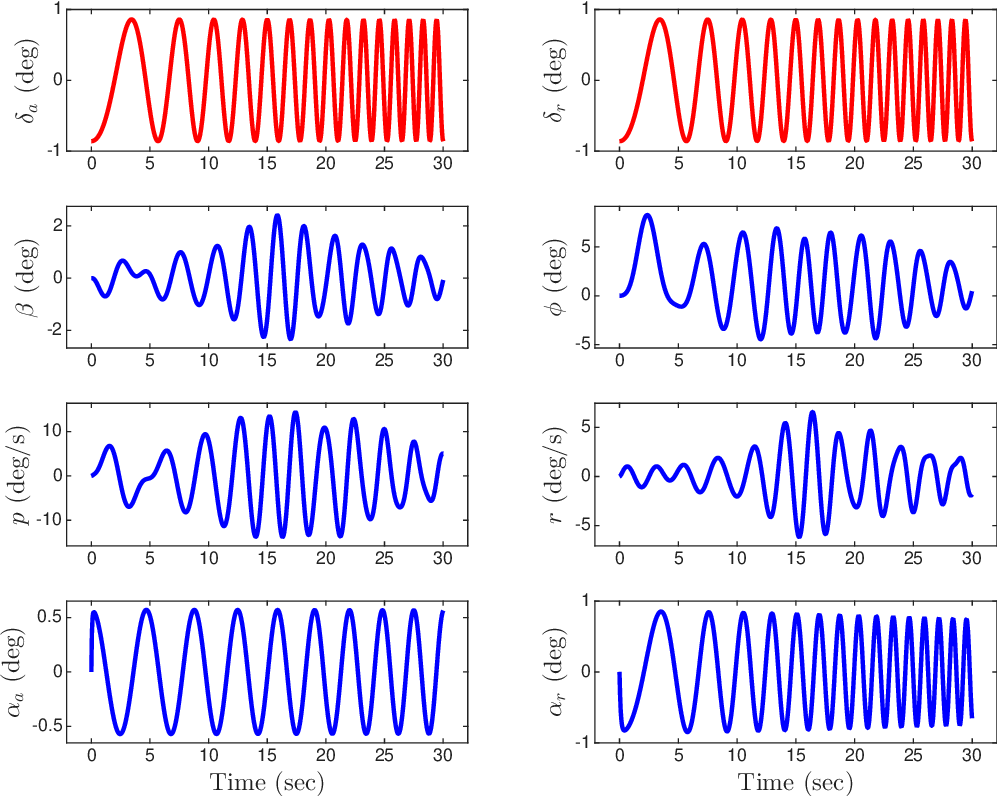}}
    \caption{The state input data $y_k$, shown in blue, and the control input signal $u_k$, shown in red, were supplied as inputs to the mixed-model NLP \eqref{eq:NLP-mixed}. Data was sampled at $20Hz$ for $T=30s$}
    \label{fig:A4_roll_mixed_data}
\end{figure}


\bibliographystyle{IEEEtran}
\bibliography{main}

\begin{thebibliography}{10}
\providecommand{\url}[1]{#1}
\csname url@samestyle\endcsname
\providecommand{\newblock}{\relax}
\providecommand{\bibinfo}[2]{#2}
\providecommand{\BIBentrySTDinterwordspacing}{\spaceskip=0pt\relax}
\providecommand{\BIBentryALTinterwordstretchfactor}{4}
\providecommand{\BIBentryALTinterwordspacing}{\spaceskip=\fontdimen2\font plus
\BIBentryALTinterwordstretchfactor\fontdimen3\font minus \fontdimen4\font\relax}
\providecommand{\BIBforeignlanguage}[2]{{%
\expandafter\ifx\csname l@#1\endcsname\relax
\typeout{** WARNING: IEEEtran.bst: No hyphenation pattern has been}%
\typeout{** loaded for the language `#1'. Using the pattern for}%
\typeout{** the default language instead.}%
\else
\language=\csname l@#1\endcsname
\fi
#2}}
\providecommand{\BIBdecl}{\relax}
\BIBdecl

\bibitem{safonov1977gain}
M.~Safonov and M.~Athans, ``Gain and phase margin for multiloop {LQG} regulators,'' \emph{IEEE Transactions on Automatic Control}, vol.~22, no.~2, pp. 173--179, 1977.

\bibitem{lehtomaki1981robustness}
N.~Lehtomaki, N.~Sandell, and M.~Athans, ``Robustness results in linear-quadratic {Gaussian} based multivariable control designs,'' \emph{IEEE Transactions on Automatic Control}, vol.~26, no.~1, pp. 75--93, 1981.

\bibitem{hespanha2018linear}
J.~P. Hespanha, \emph{Linear Systems Theory}, 2nd~ed.\hskip 1em plus 0.5em minus 0.4em\relax Princeton University Press, 2018.

\bibitem{stevens_aircraft_2015}
B.~L. Stevens, F.~L. Lewis, and E.~N. Johnson, \emph{\BIBforeignlanguage{en}{Aircraft {Control} and {Simulation}: {Dynamics}, {Controls}, {Design}, and {Autonomous} {Systems}}}, 3rd~ed.\hskip 1em plus 0.5em minus 0.4em\relax Wiley-Blackwell, 2015.

\bibitem{andersonComputationalFluidDynamics2006}
J.~D. Anderson, \emph{Computational Fluid Dynamics: The Basics with Applications}, ser. {{McGraw-Hill}} Series in Aeronautical and Aerospace Engineering.\hskip 1em plus 0.5em minus 0.4em\relax McGraw-Hill, 2006.

\bibitem{andersonFundamentalsAerodynamics2017}
------, \emph{Fundamentals of Aerodynamics}, 6th~ed., ser. {{McGraw-Hill}} Series in Aeronautical and Aerospace Engineering.\hskip 1em plus 0.5em minus 0.4em\relax McGraw-Hill Education, 2017.

\bibitem{zipfelModelingSimulationAerospace2007}
P.~Zipfel and W.~Schiehlen, \emph{Modeling and Simulation of Aerospace Vehicle Dynamics}, 2nd~ed., ser. {{AIAA}} Education Series.\hskip 1em plus 0.5em minus 0.4em\relax {American Institute of Aeronautics and Astronautics}, 2007, vol.~54.

\bibitem{vrabie_optimal_2013}
D.~L. Vrabie, \emph{\BIBforeignlanguage{en}{Optimal {Adaptive} {Control} and {Differential} {Games} by {Reinforcement} {Learning} {Principles}}}, ser. {IET} control engineering series.\hskip 1em plus 0.5em minus 0.4em\relax London: The Institution of Engineering and Technology, 2013, no.~81.

\bibitem{kaelbling1996reinforcement}
L.~P. Kaelbling, M.~L. Littman, and A.~W. Moore, ``Reinforcement learning: A survey,'' \emph{Journal of artificial intelligence research}, vol.~4, pp. 237--285, 1996.

\bibitem{sutton1998reinforcement}
R.~S. Sutton and A.~G. Barto, \emph{Reinforcement Learning: An Introduction}, 2nd~ed.\hskip 1em plus 0.5em minus 0.4em\relax MIT Press, 2018.

\bibitem{lewis_reinforcement_2011}
F.~L. Lewis and K.~G. Vamvoudakis, ``Reinforcement learning for partially observable dynamic processes: {Adaptive} dynamic programming using measured output data,'' \emph{IEEE Transactions on Systems, Man, and Cybernetics, Part B (Cybernetics)}, vol.~41, no.~1, pp. 14--25, Feb. 2011.

\bibitem{hall2011reinforcement}
J.~Hall, C.~E. Rasmussen, and J.~Maciejowski, ``Reinforcement learning with reference tracking control in continuous state spaces,'' in \emph{2011 50th IEEE Conference on Decision and Control and European Control Conference}.\hskip 1em plus 0.5em minus 0.4em\relax IEEE, 2011, pp. 6019--6024.

\bibitem{vamvoudakis_q-learning_2017}
K.~G. Vamvoudakis, ``\BIBforeignlanguage{en}{Q-learning for continuous-time linear systems: {A} model-free infinite horizon optimal control approach},'' \emph{\BIBforeignlanguage{en}{Systems \& Control Letters}}, vol. 100, pp. 14--20, Feb. 2017.

\bibitem{luo2014q}
B.~Luo, D.~Liu, and T.~Huang, ``Q-learning for optimal control of continuous-time systems,'' \emph{arXiv preprint arXiv:1410.2954}, 2014.

\bibitem{lee2012integral}
J.~Y. Lee, J.~B. Park, and Y.~H. Choi, ``Integral {Q}-learning and explorized policy iteration for adaptive optimal control of continuous-time linear systems,'' \emph{Automatica}, vol.~48, no.~11, pp. 2850--2859, 2012.

\bibitem{lee2014integral}
------, ``On integral generalized policy iteration for continuous-time linear quadratic regulations,'' \emph{Automatica}, vol.~50, no.~2, pp. 475--489, 2014.

\bibitem{farjadnasab2022model}
M.~Farjadnasab and M.~Babazadeh, ``Model-free {LQR} design by {Q}-function learning,'' \emph{Automatica}, vol. 137, p. 110060, 2022.

\bibitem{horn2012matrix}
R.~A. Horn and C.~R. Johnson, \emph{Matrix Analysis}, 2nd~ed.\hskip 1em plus 0.5em minus 0.4em\relax Cambridge University Press, 2013.

\bibitem{kailath1980linear}
T.~Kailath, \emph{Linear Systems}.\hskip 1em plus 0.5em minus 0.4em\relax Prentice Hall, 1980.

\bibitem{filippov1988differential}
A.~F. Filippov, \emph{Differential Equations with Discontinuous Righthand Sides}.\hskip 1em plus 0.5em minus 0.4em\relax Kluwer Academic Publishers, 1988.

\bibitem{coddington1956theory}
E.~A. Coddington and N.~Levinson, \emph{Theory of Ordinary Differential Equations}.\hskip 1em plus 0.5em minus 0.4em\relax McGraw-Hill Publishing Co. Ltd., 1955.

\bibitem{royden1988real}
H.~L. Royden, \emph{Real Analysis}, 3rd~ed.\hskip 1em plus 0.5em minus 0.4em\relax Macmillan New York, 1988.

\bibitem{kolavr2025chain}
J.~Kol{\'a}{\v{r}} and O.~Maleva, ``Chain rule for pointwise {Lipschitz} mappings,'' \emph{arXiv preprint arXiv:2504.14385}, 2025.

\bibitem{serrin1969general}
J.~Serrin and D.~E. Varberg, ``A general chain rule for derivatives and the change of variables formula for the {Lebesgue} integral,'' \emph{The American Mathematical Monthly}, vol.~76, no.~5, pp. 514--520, 1969.

\bibitem{mathews2004numerical}
J.~Mathews, \emph{Numerical Methods Using {Matlab}}, 3rd~ed.\hskip 1em plus 0.5em minus 0.4em\relax Prentice Hall, 1999.

\bibitem{cichella2018bernstein}
V.~Cichella, I.~Kaminer, C.~Walton, N.~Hovakimyan, and A.~Pascoal, ``Bernstein approximation of optimal control problems,'' \emph{arXiv preprint arXiv:1812.06132}, 2018.

\bibitem{elnagar1995pseudospectral}
G.~Elnagar, M.~A. Kazemi, and M.~Razzaghi, ``The pseudospectral {Legendre} method for discretizing optimal control problems,'' \emph{IEEE Transactions on Automatic Control}, vol.~40, no.~10, pp. 1793--1796, 1995.

\bibitem{bonner2026pilot}
H.~M. Bonner and M.~R. Kirchner, ``Human as an actuator dynamic model identification,'' in \emph{2026 IEEE Aerospace Conference}, 2026, pp. 1--5.

\bibitem{proctor2016dynamic}
J.~L. Proctor, S.~L. Brunton, and J.~N. Kutz, ``Dynamic mode decomposition with control,'' \emph{{SIAM} Journal on Applied Dynamical Systems}, vol.~15, no.~1, pp. 142--161, 2016.

\bibitem{brunton2022data}
S.~L. Brunton and J.~N. Kutz, \emph{Data-Driven Science and Engineering: {Machine} Learning, Dynamical Systems, and Control}.\hskip 1em plus 0.5em minus 0.4em\relax Cambridge University Press, 2022.

\bibitem{willems2005note}
J.~C. Willems, P.~Rapisarda, I.~Markovsky, and B.~L. De~Moor, ``A note on persistency of excitation,'' \emph{Systems \& Control Letters}, vol.~54, no.~4, pp. 325--329, 2005.

\bibitem{van2020willems}
H.~J. Van~Waarde, C.~De~Persis, M.~K. Camlibel, and P.~Tesi, ``Willems’ fundamental lemma for state-space systems and its extension to multiple datasets,'' \emph{IEEE Control Systems Letters}, vol.~4, no.~3, pp. 602--607, 2020.

\bibitem{watkins1989learning}
C.~J. C.~H. Watkins, ``Learning from delayed rewards,'' Ph.D. dissertation, University of Cambridge, 1989.

\bibitem{watkins1992q}
C.~J. Watkins and P.~Dayan, ``Q-learning,'' \emph{Machine Learning}, vol.~8, pp. 279--292, 1992.

\bibitem{baird_reinforcement_1994}
L.~Baird, ``Reinforcement learning in continous time: {Advantage} updating,'' \emph{Proceedings of 1994 IEEE International Conference on Neural Networks (ICNN)}, vol.~4, pp. 2448--2453, 1994.

\bibitem{franklinFeedbackControlDynamic2018}
G.~F. Franklin, J.~D. Powell, and A.~Emami-Naeini, \emph{Feedback Control of Dynamic Systems}, 8th~ed.\hskip 1em plus 0.5em minus 0.4em\relax Pearson, 2018.

\bibitem{bowerfind2025model}
S.~R. Bowerfind, M.~R. Kirchner, G.~A. Hewer, D.~R. Robinson, P.~Chen, A.~Farahmandi, and K.~Estabridis, ``A model-free data-driven algorithm for continuous-time control,'' in \emph{2025 IEEE Aerospace Conference}, 2025, pp. 1--10.

\bibitem{wachter2006implementation}
A.~W{\"a}chter and L.~T. Biegler, ``On the implementation of an interior-point filter line-search algorithm for large-scale nonlinear programming,'' \emph{Mathematical Programming}, vol. 106, pp. 25--57, 2006.

\bibitem{andersson2019casadi}
J.~A. Andersson, J.~Gillis, G.~Horn, J.~B. Rawlings, and M.~Diehl, ``{CasADi}: {A} software framework for nonlinear optimization and optimal control,'' \emph{Mathematical Programming Computation}, vol.~11, pp. 1--36, 2019.

\bibitem{schmidtModernFlightDynamics2012}
D.~K. Schmidt, \emph{Modern Flight Dynamics}, 1st~ed.\hskip 1em plus 0.5em minus 0.4em\relax McGraw-Hill, 2012.

\bibitem{al37}
\emph{Freewing AL37 Airliner User Manual}, Freewing, 2025.

\bibitem{lpms-curs3}
\emph{LPMS-CURS3 Datasheet}, LP-RESEARCH Inc., Tokyo, Japan, 2025, available at \url{https://www.lp-research.com/wp-content/uploads/2025/10/20251016_LPMS-CURS3_EN.pdf}.

\bibitem{here3}
\emph{Here3/3+ Manual}, CubePilot, 2022, available at \url{https://docs.cubepilot.org/user-guides/here-3/here-3-manual}.

\bibitem{willems_least_1971}
J.~Willems, ``\BIBforeignlanguage{en}{Least squares stationary optimal control and the algebraic {Riccati} equation},'' \emph{\BIBforeignlanguage{en}{IEEE Transactions on Automatic Control}}, vol.~16, no.~6, pp. 621--634, Dec. 1971.

\end{thebibliography}


\begin{IEEEbiography}[{\includegraphics[width=1in,height=1.25in,clip,keepaspectratio]{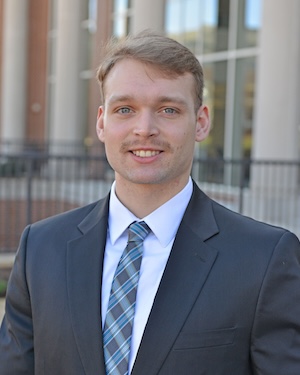}}]{Sean R. Bowerfind} is a Ph.D. Candidate in the Department of Electrical and Computer Engineering at Auburn University. His research areas include flight mechanics, optimal control, trajectory optimization, and applied mathematics. He received his B.S. in aerospace engineering from Auburn University in 2022 and his M.S. in aerospace engineering from Auburn University in 2023. He was the recipient of Robert G. Pitts Award for the Most Outstanding Senior in aerospace engineering and named the Undergraduate Student of the Year by the AIAA Greater Huntsville Section in 2022. Additionally, Sean is an officer in the United States Air Force, currently assigned to the Air Force Institute of Technology Civilian Institutions Program, and will attend Euro-NATO Joint-Jet Pilot Training upon completion of his Ph.D. 
\end{IEEEbiography}

\begin{IEEEbiography}[{\includegraphics[width=1in,height=1.25in,clip,keepaspectratio]{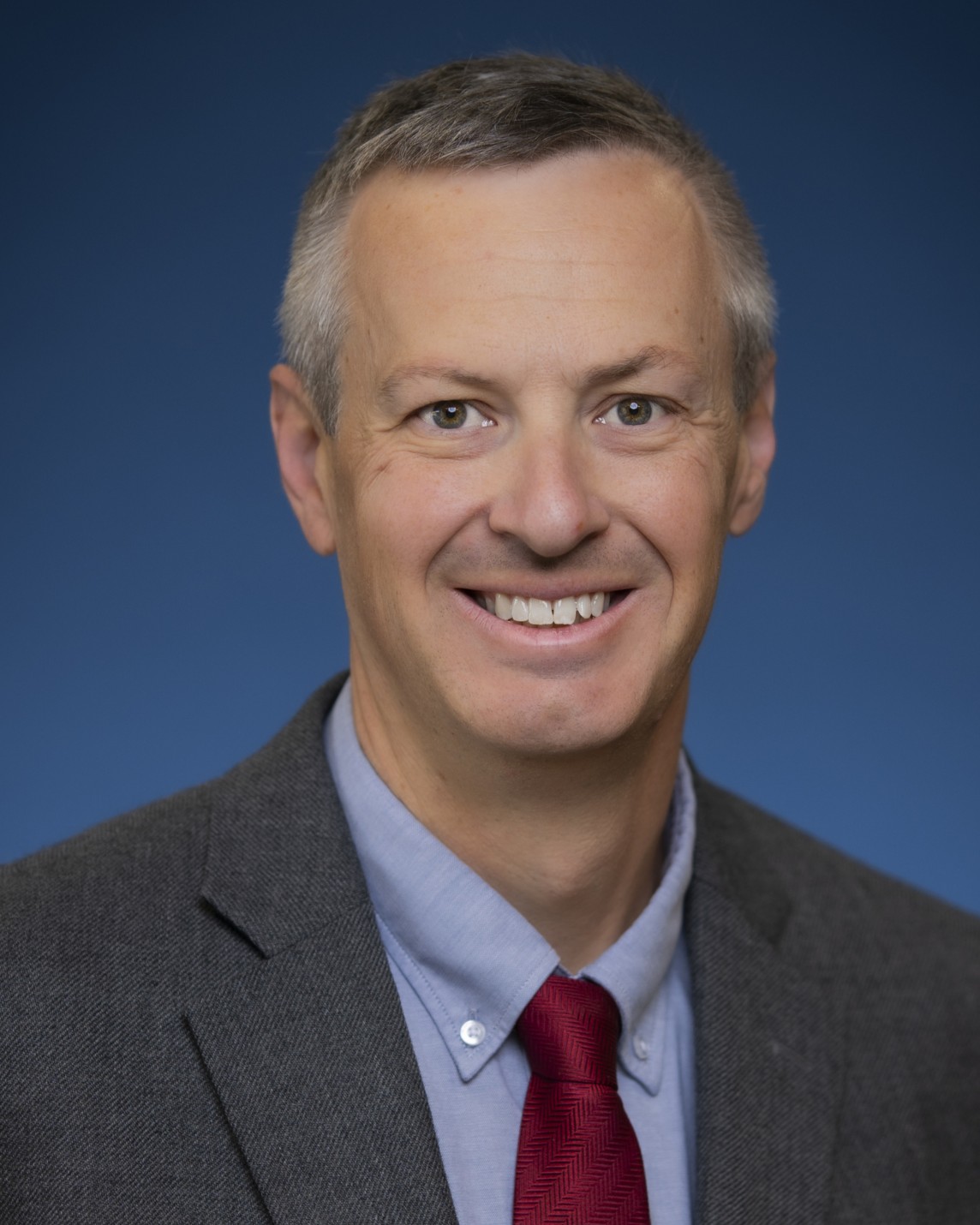}}]{Matthew R. Kirchner} (Member, IEEE) is the Godbold Endowed Assistant Professor of Electrical and Computer Engineering at Auburn University. He received the B.S. degree in mechanical engineering from Washington State University in 2007, the M.S. degree in electrical engineering from the University of Colorado at Boulder in 2013, and the Ph.D. degree in electrical and computer engineering from the University of California, Santa Barbara in 2023. He spent over 16 years at the Naval Air Warfare Center Weapons Division, China Lake, first joining the Navigation and Weapons Concepts Develop Branch in 2007 as a staff engineer. In 2012 he transferred into the Physics and Computational Sciences Division in the Research and Intelligence Department, where he served as a senior research scientist. His research interests include level set methods for optimal control, differential games, and reachability; multi-vehicle robotics; nonparametric signal and image processing; and navigation and flight control. He was the recipient of a Naval Air Warfare Center Weapons Division Graduate Academic Fellowship from 2010 to 2012; in 2011 was named a Paul Harris Fellow by Rotary International and in 2021 was awarded a Robertson Fellowship from the University of California in recognition of an outstanding academic record.
\end{IEEEbiography}

\begin{IEEEbiography}[{\includegraphics[width=1in,height=1.25in,clip,keepaspectratio]{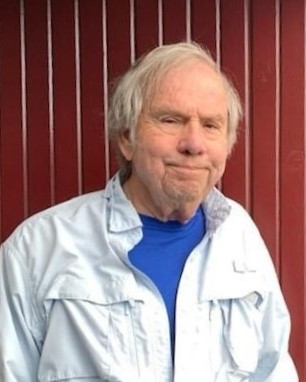}}]
{Gary A. Hewer} received his B.A. degree from Yankton College, South Dakota in 1962, and his M.S. degree and Ph.D. degree in mathematics from Washington State University in 1964 and 1968, respectively. Gary joined the Naval Air Warfare Center Weapons Division, China Lake, in 1968 and retired in 2025 as the NAVAIR Senior Scientist for Image and Signal Processing. In 1986 and 1987 he served as the interim Scientific Officer for Applied Analysis in the Mathematics Division at the Office of Naval Research (ONR). During his over 50 years of experience he has performed research on control, radar guidance, wavelet applications for both compression and small target detection, image registration and processing, probability theory, and autonomy. In 1987, he received the then Naval Weapons Center’s Technical Director’s Award for his work in control theory and was elected a Senior Fellow of the Naval Weapons Center in 1990 for his work in radar tracking, target modeling, and control theory. In 1998, he
was awarded the Navy Meritorious Civilian Award for his contributions as a Navy research scientist, and in 2002, he was inducted as a NAVAIR fellow. Recently, he was named an Esteemed NAVAIR Fellow.
\end{IEEEbiography}

\balance

\end{document}